\newtheorem{theorem}{Theorem}
\numberwithin{equation}{section}
\numberwithin{lemma}{section}
\numberwithin{theorem}{section}
\numberwithin{corollary}{section}
\begin{document}
\setcounter{page}{1}

\title{Some  results on  the Kamp\'e de Fe\'riet hypergeometric  matrix function 
}

\author{
Ashish Verma\footnote{Corresponding author}
\\ 
Department of Mathematics\\ Prof. Rajendra Singh (Rajju Bhaiya)\\ Institute of Physical Sciences for Study and Research \\  V.B.S. Purvanchal University, Jaunpur  (U.P.)- 222003, India\\
vashish.lu@gmail.com
}

\maketitle
\begin{abstract}
 In this paper, we obtain  recursion formulas for the Kamp\'e de Fe\'riet hypergeometric  matrix function. We also give finite and  infinite summation  formulas for the Kamp\'e de Fe\'riet hypergeometric  matrix function. \\[12pt]
Keywords: Matrix functional calculus, Kamp\'e de Fe\'riet hypergeometric  matrix function, Recursion formula\\[12pt]
AMS Subject Classification:  15A15; 33C65; 33C70
\end{abstract}

\section{Introduction}
Recursion formulas for the Appell function $F_2$ have been studied by Opps, Saad and Srivastava \cite{OP}, followed by Wang \cite{XW},  who presented the recursion relations for all Appell functions. The authors have carried out a systematic study of recursion formulas of multivariable hypergeometric functions including fourteen  three variable Lauricella functions, three Srivastava's triple hypergeometric functions  and four $k$--variable Lauricella functions \cite{VS1} and Exton's triple hypergeometric functions \cite{VS2}. The recursion formulas for the general triple hypergeometric function \cite{SK} were obtained in \cite{VS3}. These results were unified and generalized in \cite{VS1} for the three variable hypergeometric function. In \cite{VS4}, recursion formulas for the general  Kamp\'e de F\'eriet series and Srivastava and Daoust multivariable hypergeometric function were presented. In this paper, we begin a study of recursion formulas satisfied by one and two variable hypergeometric matrix functions. In particular, we present  recursion formulas for  the Gauss hypergeometric  and Appell matrix functions.
 
The matrix theory is being recently used in theory of orthogonal polynomials and special functions. Special matrix functions appear in the literature related to Statistics \cite{A}, Lie theory \cite{AT} and more recently in connection with the matrix  version of Laguerre, Hermite and Legendre differential equations and the corresponding polynomial families \cite{LR,LRE,LR1}.

Let $\mathbb{C}^{r\times r}$ be the vector space of $r$ square matrices with complex entries. For any matrix $A\in \mathbb{C}^{r\times r}$, its spectrum $\sigma(A)$ is the set of eigenvalues of $A$. If $f(z)$ and $g(z)$ are holomorphic functions of the complex variable $z$, which are defined in an open set $\Omega$ of the complex plane and $A\in \mathbb{C}^{r\times r}$ with $\sigma(A)\subset\Omega$, then from the properties of the matrix functional calculus \cite{ND}, we have  $f(A) g(A)= g(A) f(A)$. If $B\in\mathbb{C}^{r\times r}$ is a matrix for which $\sigma(B)\subset\Omega$, and if $AB=BA$  then $f(A) g(B)= g(B) f(A)$. A square matrix $A$ in $\mathbb{C}^{r\times r}$  is said to be positive stable if $\Re(\lambda)>0$ for all $\lambda\in\sigma(A)$. 

The reciprocal gamma function $\Gamma^{-1}(z)=1/\Gamma(z)$ is an entire function of the complex variable $z$. The image of $\Gamma^{-1}(z)$ acting on $A$, denoted by $\Gamma^{-1}(A)$, is a well defined matrix. If $A+nI$ is invertible for all integers $n\geq 0$, then the reciprocal gamma function \cite{LC1} is defined as 
$\Gamma^{-1}(A)= (A)_n \ \Gamma^{-1}(A+nI)$, where $(A)_n$ is the shifted factorial matrix function for $A\in\mathbb{C}^{r\times r}$ given by \cite{LC}
\begin{align*} 
	(A)_n=
\begin{cases}
I ,
& n=0,\\
 A(A+I) \cdots (A+(n-1)I) ,
&n\geq 1 .
\end{cases}
\end{align*}
$I$ being the $r$-square identity matrix.
If $A\in\mathbb{C}^{r\times r}$ is a positive stable matrix and $n\geq1$, then by \cite{LC1} we have $\Gamma(A) = \lim_{n \to\infty}(n-1)! (A)^{-1}_{n} n^A$.

The Gauss hypergeometric  matrix function \cite{LC} is defined by 
\begin{align}
_2F_1(A, B; C; x)=\sum_{n=0}^{\infty}\frac{(A)_{n}(B)_{n}(C)_{n}^{-1}}{n!}\, x^{n},\label{1eq1}
\end{align}
for matrices $A$, $B$ and $C$ in $\mathbb{C}^{r\times r}$ such that $C+kI$ is invertible for all $k\geq 0$ and $|x|\leq 1$.

The Appell  matrix functions are defined as 
\begin{align}
F_{1}(A, B, B'; C;x, y)&=\sum_{m, n=0}^{\infty}\frac{(A)_{m+n}(B)_{m}(B')_{n}(C)_{m+n}^{-1}}{m! n!}\, x^m y^n,\label{1eq2}
\\
F_{2}(A, B, B'; C, C';x, y)&=\sum_{m, n=0}^{\infty}\frac{(A)_{m+n}(B)_{m}(B')_{n}(C)_{m}^{-1}(C')_{n}^{-1}}{m! n!} \,x^m y^n,\label{1eq3}
\\
F_{3}(A, A', B, B'; C;x, y)&=\sum_{m, n=0}^{\infty}\frac{(A)_{m}(A')_{n}(B)_{m}(B')_{n}(C)_{m+n}^{-1}}{m! n!}\, x^m y^n,\label{1eq4}
\\
F_{4}(A, B; C, C';x, y)&=\sum_{m, n=0}^{\infty}\frac{(A)_{m+n}(B)_{m+n}(C)_{m}^{-1}(C')_{n}^{-1}}{m! n!} \,x^m y^n,\label{1eq5}
\end{align}
where $A$, $A'$, $B$, $B'$, $C$, $C'$  are positive stable matrices in $\mathbb{C}^{r\times r}$  such that $C+kI$  and $C'+kI$ are invertible for all integers $k\geq 0$. For regions of convergence of (\ref{1eq2})-(\ref{1eq5}), see \cite{QM,RD,RD3}.\\

The Kamp\'e de F\'eriet hypergeometric matrix function is defined by \cite{RD, RD3}
\begin{align}
&F^{m_1; n_1, n'_1}_{m_2; n_2, n'_2}\left(^{A: B, C}_{D: E, F}; x, y\right)\notag\\
&= \sum_{m, n \geq 0} \prod_{i=1}^{m_1}(A_i)_{m+n} \prod_{i=1}^{n_1}(B_i)_{m} \prod_{i=1}^{n'_1}(C_i)_{n} \prod_{i=1}^{m_2}(D_i)^{-1}_{m+n} \prod_{i=1}^{n_2}(E_i)^{-1}_{m} \prod_{i=1}^{n'_2}(F_i)^{-1}_{n} \frac{x^{m} y^{n}}{m! n!}\label{1eq6}
\end{align}
where $A$ abbreviates the sequence of matrices $A_1,\dots, A_{m_1},$ etc. and $A_i$, $B_i$, $C_i$, $D_i$, $E_i$, $F_i$ are positive stable matrices in $\mathbb{C}^{r\times r}$  such that $D_i+kI$, $E_i+kI$ and $F_i+kI$ are invertible for all integers $k\geq 0$.\\
Next, we recall the definition of derivative operator 
\begin{align*}
D_{y}f(y)=\lim_{h\to 0}\frac{f(y+h)-f(y)}{h},
\end{align*}
provided $f$ is differentiable at $y$. Also $D_{y}^{k}f{(y)}=D_{y}(D_{y}^{k-1}f(y))$,  $k=0,1,2,\dots.$

 Throughout the paper, $I$ denotes the identity matrix and $s$ denotes a non-negative integer. 
Following abbreviated notations are used. We, for example, write 
\begin{align}
&A+sI= A_1+sI, A_2+sI,\dots, A_{m_{1}}+sI\notag\\
&A^{i}=A_1,\dots,A_{i-1}, A_{i+1}, \dots, A_{m_1}\notag\\
&A^{i}+sI=A_1+sI,\dots,A_{i-1}+sI, A_{i+1}+sI, \dots, A_{m_1}+sI.\notag\end{align}
Also, we denote
\begin{align}
\qquad [A+kI]_{s}=\prod_{i=1}^{m_1}\,(A_i+kI)_{s}, \qquad [A+kI]^{-1}_{s}=\prod_{i=1}^{m_1}\,(A_i+kI)^{-1}_{s}\notag\\
\qquad
[A^j+kI]_{s}=\prod_{i=1, i\neq j}^{m_1}\,(A_i +kI)_{s}\qquad
[A^j+kI]^{-1}_{s}=\prod_{i=1, i\neq j}^{m_1}\,(A_i +kI)^{-1}_{s}.\notag
\end{align}

\section{Recursion formulas for the Kamp\'e de F\'eriet hypergeometric matrix function}
In this section, we obtain the recursion formulas for the Kamp\'e de F\'eriet hypergeometric matrix function. 
\begin{theorem}\label{rth12} Let $A_i+sI, i=1,\dots, m_1$ be  invertible  for all integers $s\geq0$. 
Then the following recursion formula hold  true for the  Kamp\'e de F\'eriet hypergeometric matrix function:
\begin{align}
&F^{m_1; n_1, n'_1}_{m_2; n_2, n'_2}\left(^{A^{i}, \,A_i+sI: B, C}_{\,\,\,\,\,\,\,\,D\,\,\,\,\,\,\,\,\,:\, E,\, F}; x, y\right)\notag\\&=F^{m_1; n_1, n'_1}_{m_2; n_2, n'_2}\left(^{A: B, C}_{D: E, F}; x, y\right)+ x[A^{i}][B]\Big[\sum_{k=1}^{s}F^{m_1; n_1, n'_1}_{m_2; n_2, n'_2}\left(^{A^{i}+I, \,A_i+kI: B+I, C}_{\,\,\,\,\,\,\,\,D+I\,\,\,\,\,:\, E+I,\, F}; x, y\right)\Big][D]^{-1}[E]^{-1}\notag\\
&+ y[A^{i}][C]\Big[\sum_{k=1}^{s}F^{m_1; n_1, n'_1}_{m_2; n_2, n'_2}\left(^{A^{i}+I, \,A_i+kI: B, C+I}_{\,\,\,\,\,\,\,\,D+I\,\,\,\,\,:\, E,\, F+I}; x, y\right)\Big][D]^{-1}[F]^{-1}.
\label{2eq1}
\end{align}
Furthermore, if $A_i-kI$ is invertible for integers  $k\leq s$, then
\begin{align}
&F^{m_1; n_1, n'_1}_{m_2; n_2, n'_2}\left(^{A^{i}, \,A_i-sI: B, C}_{\,\,\,\,\,\,\,\,D\,\,\,\,\,\,\,\,\,:\, E,\, F}; x, y\right)\notag\\&=F^{m_1; n_1, n'_1}_{m_2; n_2, n'_2}\left(^{A: B, C}_{D: E, F}; x, y\right)- x[A^{i}][B]\sum_{k=0}^{s-1}F^{m_1; n_1, n'_1}_{m_2; n_2, n'_2}\left(^{A^{i}+I, \,A_i-kI: B+I, C}_{\,\,\,\,\,\,\,\,D+I\,\,\,\,\,:\, E+I,\, F}; x, y\right)[D]^{-1}[E]^{-1}\notag\\
&- y[A^{i}][C]\sum_{k=0}^{s-1}F^{m_1; n_1, n'_1}_{m_2; n_2, n'_2}\left(^{A^{i}+I, \,A_i-kI: B, C+I}_{\,\,\,\,\,\,\,\,D+I\,\,\,\,\,:\, E,\, F+I}; x, y\right)[D]^{-1}[F]^{-1},
\label{2eq2}
\end{align}
where $A_i$, $B_i$, $C_i$, $D_i$, $E_i$, $F_i$ are positive stable matrices in $\mathbb{C}^{r\times r}$  such that $A_i A_j= A_j A_i$; $A_i B_j=B_j A_i$; $A_i C_{j}= C_{j} A_i$;  $B_i C_{j}= C_{j} B_i$;  $F_i E_j=E_j F_i$; $F_j D_{i}= D_{i} F_j$; $D_i E_j=E_j D_i$ and $D_i+kI$, $E_i+kI$ and $F_i+kI$ are invertible for all integers $k\geq 0$.
\end{theorem}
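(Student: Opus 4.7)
My plan is to prove \eqref{2eq1} by induction on $s$, then deduce \eqref{2eq2} from \eqref{2eq1} by a parameter shift. The base case $s=0$ is immediate: the two sums on the right-hand side of \eqref{2eq1} are empty, and the two Kamp\'e de F\'eriet functions on either side of the leading term coincide. For the inductive step, it suffices to establish the one-step contiguous relation obtained by differencing \eqref{2eq1} at consecutive values of $s$, namely the statement that the difference of the Kamp\'e de F\'eriet function on the left at shifts $s+1$ and $s$ equals precisely the $k=s+1$ summand on the right.

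To obtain this contiguous relation, the key algebraic tool is the elementary identity $(X+I)_n-(X)_n=n(X+I)_{n-1}$, which comes from the shared factor $(X+I)_{n-1}$ in the two factorizations $(X)_n=X(X+I)_{n-1}$ and $(X+I)_n=(X+I)_{n-1}(X+nI)$. Applied with $X=A_i+sI$ termwise in the defining series \eqref{1eq6}, this rewrites $(A_i+(s+1)I)_{m+n}-(A_i+sI)_{m+n}$ as $(m+n)\,(A_i+(s+1)I)_{m+n-1}$. Splitting $m+n=m+n$ and shifting $m\mapsto m+1$ in the first piece and $n\mapsto n+1$ in the second, then using the factorizations $[A^i]_{k+1}=[A^i]\,[A^i+I]_k$, $[B]_{m+1}=[B]\,[B+I]_m$, $[C]_{n+1}=[C]\,[C+I]_n$, $[D]^{-1}_{k+1}=[D+I]^{-1}_k\,[D]^{-1}$, $[E]^{-1}_{m+1}=[E+I]^{-1}_m\,[E]^{-1}$, $[F]^{-1}_{n+1}=[F+I]^{-1}_n\,[F]^{-1}$, each piece becomes $x$ (respectively $y$) times a prefactor of extracted matrices, times a new Kamp\'e de F\'eriet series matching the shape appearing on the right of \eqref{2eq1}, times a postfactor of extracted inverse matrices.

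The main obstacle is the non-commutativity: the numerator matrices $A,B,C$ do not in general commute with the denominator matrices $D,E,F$, so one cannot freely pull $[A^i],[B]$ (or $[C]$) to the far left and push $[D]^{-1},[E]^{-1}$ (or $[F]^{-1}$) to the far right of a term. This is precisely where the commutation hypotheses $A_iA_j=A_jA_i$, $A_iB_j=B_jA_i$, $A_iC_j=C_jA_i$, $B_iC_j=C_jB_i$, $D_iE_j=E_jD_i$, $F_jD_i=D_iF_j$, $E_iF_j=F_iE_j$ enter: they guarantee that $[A^i]$ and $[B]$ (resp.\ $[C]$) commute with every remaining numerator factor, and that $[D]^{-1}$ and $[E]^{-1}$ (resp.\ $[F]^{-1}$) commute with every remaining denominator factor, justifying the extraction. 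The bulk of the technical work is careful bookkeeping through these commutations.

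Once the contiguous relation is established, telescoping yields \eqref{2eq1} after factoring the constant pre- and post-multipliers out of the $k$-sum. For \eqref{2eq2}, I would apply \eqref{2eq1} with $A_i$ replaced by $A_i-sI$ -- the invertibility hypothesis on $A_i-kI$ for $k\le s$ ensures that all shifted factorials remain well-defined. The substituted left-hand side becomes the original $F^{m_1;n_1,n'_1}_{m_2;n_2,n'_2}$ on the left of \eqref{2eq1}, the leading right-hand term becomes the function with parameter $A_i-sI$ appearing on the left of \eqref{2eq2}, and after reindexing the two $k$-sums via $k\mapsto s-k$ (so they now run over $0,\dots,s-1$), solving for the $A_i-sI$ function produces \eqref{2eq2}.
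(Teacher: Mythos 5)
Your proposal is correct and follows essentially the same route as the paper: both rest on the one-step contiguous relation coming from the Pochhammer difference identity (your $(X+I)_n-(X)_n=n(X+I)_{n-1}$ is just a rearrangement of the paper's $(A_i+I)_{m+n}=A_i^{-1}(A_i)_{m+n}(A_i+(m+n)I)$), followed by the same series reindexing $m\mapsto m+1$, $n\mapsto n+1$ and telescoping/iteration to reach \eqref{2eq1}. The only minor divergence is \eqref{2eq2}, which you obtain by substituting $A_i\mapsto A_i-sI$ into the already-proved \eqref{2eq1} and reindexing, whereas the paper instead iterates a downward one-step contiguous relation; the two are interchangeable.
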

\begin{proof} 
From the definition of the  Kamp\'e de F\'eriet hypergeometric matrix function (\ref{1eq6}) and  the relation
\begin{align}
(A_i+I)_{m+n}= A_{i}^{-1}(A_i)_{m+n}\left(A_i+{mI}+{nI}\right) ,\label{2eqp1}
\end{align}
we get the following contiguous matrix  relation:
\begin{align}
&F^{m_1; n_1, n'_1}_{m_2; n_2, n'_2}\left(^{A^{i},\, A_i+I: B, C}_{\,\,\,\,\,\,\,\,D\,\,\,\,\,\,\,\,\,:\, E,\, F}; x, y\right)\notag\\&=F^{m_1; n_1, n'_1}_{m_2; n_2, n'_2}\left(^{A: B, C}_{D: E, F}; x, y\right)+ x[A^{i}][B]\Big[ F^{m_1; n_1, n'_1}_{m_2; n_2, n'_2}\left(^{A+I: B+I, C}_{D+I:\, E+I,\, F}; x, y\right)\Big][D]^{-1}[E]^{-1}\notag\\
&+ y[A^{i}][C]\Big[ F^{m_1; n_1, n'_1}_{m_2; n_2, n'_2}\left(^{A+I: B, C+I}_{D+I : E, F+I}; x, y\right)\Big][D]^{-1}[F]^{-1}.
\label{2eqp2}
\end{align}
Replacing $A_i$ with $A_i+I$ in (\ref{2eqp2}) and using (\ref{2eqp2}), we have the following contiguous  matrix relation:
\begin{align}
&F^{m_1; n_1, n'_1}_{m_2; n_2, n'_2}\left(^{A^{i},\, A_i+2I: B, C}_{\,\,\,\,\,\,\,\,D\,\,\,\,\,\,\,\,\,:\, E,\, F}; x, y\right)\notag\\&=F^{m_1; n_1, n'_1}_{m_2; n_2, n'_2}\left(^{A: B, C}_{D: E, F}; x, y\right)+ x[A^{i}][B]\Big[\sum_{k=1}^{2}F^{m_1; n_1, n'_1}_{m_2; n_2, n'_2}\left(^{A^{i}+I, \,A_i+kI: B+I, C}_{\,\,\,\,\,\,\,\,D+I\,\,\,\,\,:\, E+I,\, F}; x, y\right)\Big][D]^{-1}[E]^{-1}\notag\\
&+ y[A^{i}][C]\Big[\sum_{k=1}^{2}F^{m_1; n_1, n'_1}_{m_2; n_2, n'_2}\left(^{A^{i}+I,\, A_i+kI: B, C+I}_{\,\,\,\,\,\,\,\,D+I\,\,\,\,\,:\, E,\, F+I}; x, y\right)\Big][D]^{-1}[F]^{-1}.
\label{2eqp3}
\end{align}
Iterating this process $s$ times, we get (\ref{2eq1}).  For the proof of (\ref{2eq2}) replace the matrix $A_i$ with $A_i-I$ in (\ref{2eqp2}). As $A_i-I$ is invertible, we have
\begin{align}
&F^{m_1; n_1, n'_1}_{m_2; n_2, n'_2}\left(^{A^{i}, \,A_i-I: B, C}_{\,\,\,\,\,\,\,\,D\,\,\,\,\,\,\,\,\,:\, E,\, F}; x, y\right)\notag\\&=F^{m_1; n_1, n'_1}_{m_2; n_2, n'_2}\left(^{A: B, C}_{D: E, F}; x, y\right)- x[A^{i}][B]\Big[ F^{m_1; n_1, n'_1}_{m_2; n_2, n'_2}\left(^{A^{i}+I,\,A_i\,:  B+I, C}_{\,\,\,\,D+I\,\,\,\, :\, E+I,\, F}; x, y\right)\Big][D]^{-1}[E]^{-1}\notag\\
&- y[A^{i}][C]\Big[F^{m_1; n_1, n'_1}_{m_2; n_2, n'_2}\left(^{A^{i}+I,\,A_i\,: B, C+I}_{ \,\,\,\,D+I\,\,\,:\, E,\, F+I}; x, y\right)\Big][D]^{-1}[F]^{-1}.\label{2eqp4}
\end{align}
Iteratively, we get (\ref{2eq2}).
\end{proof}

Using contiguous matrix relations (\ref{2eqp2}) and (\ref{2eqp4}), we get following forms of the recursion formulas for the  Kamp\'e de F\'eriet hypergeometric matrix function.
\begin{theorem}\label{rth21}
Let $A_i+sI, i=1,\dots, m_1$ be  invertible  for all integers $s\geq0$. Then the following recursion formula holds  true for the  Kamp\'e de F\'eriet hypergeometric matrix function:
\begin{align}
&F^{m_1; n_1, n'_1}_{m_2; n_2, n'_2}\left(^{A^{i}, \,A_i+sI: B, C}_{\,\,\,\,\,\,\,\,D\,\,\,\,\,\,\,\,\,:\, E,\, F}; x, y\right)\notag\\&=\sum_{k_1+k_2\leq s}^{}{s\choose k_1, k_2}{[A^{i}]_{k_1+k_2}[B]_{k_1}}[C]_{k_2}\, \notag\\&\times x^{k_1}y^{k_2} F^{m_1; n_1, n'_1}_{m_2; n_2, n'_2}\left(^{ A+(k_1+k_2)I: B+k_1I, C+k_2I}_{D+(k_1+k_2)I:\, E+k_1 I, F+k_2I}; x, y\right){[D]^{-1}_{k_1+k_2}[E]^{-1}_{k_1}[F]^{-1}_{k_2}}.
\label{2eq3}
\end{align}
Furthermore, if $A_i-kI$ is invertible for integers $k\leq s$, then
\begin{align}
&F^{m_1; n_1, n'_1}_{m_2; n_2, n'_2}\left(^{A^{i}, \,A_i-sI: B, C}_{\,\,\,\,\,\,\,\,D\,\,\,\,\,\,\,\,\,:\, E,\, F}; x, y\right)\notag\\&=\sum_{k_1+k_2\leq s}^{}{s\choose k_1, k_2}{[A^{i}]_{k_1+k_2}[B]_{k_1}}[C]_{k_2}\,\notag\\&\times  (-x)^{k_1} (-y)^{k_2} F^{m_1; n_1, n'_1}_{m_2; n_2, n'_2}\left(^{ A^{i}+(k_1+k_2)I, \,A_i: B+k_1I, C+k_2I}_{D+(k_1+k_2)I:\, E+k_1I,\, F+k_2I}; x, y\right){[D]^{-1}_{k_1+k_2}[E]^{-1}_{k_1}[F]^{-1}_{k_2}},\label{2eq4}
\end{align}
where $A_i$, $B_i$, $C_i$, $D_i$, $E_i$, $F_i$ are positive stable matrices in $\mathbb{C}^{r\times r}$  such that $A_i A_j= A_j A_i$; $A_i B_j=B_j A_i$; $A_i C_{j}= C_{j} A_i$;  $B_i C_{j}= C_{j} B_i$;  $F_i E_j=E_j F_i$; $F_j D_{i}= D_{i} F_j$; $D_i E_j=E_j D_i$ and $D_i+kI$, $E_i+kI$ and $F_i+kI$ are invertible for all integers $k\geq 0$.
\end{theorem}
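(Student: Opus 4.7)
My plan is to prove both (\ref{2eq3}) and (\ref{2eq4}) by induction on $s$, iterating the one-step contiguous matrix relations (\ref{2eqp2}) and (\ref{2eqp4}) from the proof of Theorem~\ref{rth12}, and then assembling the resulting terms via the trinomial Pascal identity
\[
\binom{s}{k_1,k_2} = \binom{s-1}{k_1,k_2} + \binom{s-1}{k_1-1,k_2} + \binom{s-1}{k_1,k_2-1},
\]
with the convention that $\binom{s-1}{a,b}$ vanishes whenever $a<0$, $b<0$, or $a+b>s-1$. The base case $s=1$ of (\ref{2eq3}) is exactly (\ref{2eqp2}), and the base case $s=1$ of (\ref{2eq4}) is (\ref{2eqp4}).

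For the inductive step of (\ref{2eq3}), I substitute $A_i\to A_i+(s-1)I$ into (\ref{2eqp2}) to split the left-hand side into three pieces: a leading Kamp\'e de F\'eriet function $F^{m_1;n_1,n'_1}_{m_2;n_2,n'_2}\left(^{A^i,\,A_i+(s-1)I:\,B,C}_{\,\,\,\,\,\,\,\,D\,\,\,\,\,\,\,\,\,:\,E,F};x,y\right)$, an $x$-piece, and a $y$-piece. The $x$-piece, after the substitution $\tilde{A}_j=A_j+I$ (all $j$), $\tilde{B}_l=B_l+I$, $\tilde{D}_l=D_l+I$, $\tilde{E}_l=E_l+I$, $\tilde{C}=C$, $\tilde{F}=F$, is again of the form $F^{m_1;n_1,n'_1}_{m_2;n_2,n'_2}\left(^{\tilde{A}^i,\,\tilde{A}_i+(s-1)I:\,\tilde{B},\tilde{C}}_{\,\,\,\,\,\,\,\,\tilde{D}\,\,\,\,\,\,\,\,\,:\,\tilde{E},\tilde{F}};x,y\right)$, so the inductive hypothesis $P(s-1)$ applies to it; an analogous substitution with the roles of $B,E$ played by $C,F$ handles the $y$-piece.

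Expanding each of the three pieces via $P(s-1)$ and re-indexing the $x$-piece sum by $k_1=l_1+1$, $k_2=l_2$ and the $y$-piece sum by $k_1=l_1$, $k_2=l_2+1$, the coefficient of the Kamp\'e de F\'eriet function indexed by a fixed $(k_1,k_2)$ receives three contributions $\binom{s-1}{k_1,k_2}$, $\binom{s-1}{k_1-1,k_2}$, and $\binom{s-1}{k_1,k_2-1}$, which combine via the trinomial Pascal identity into $\binom{s}{k_1,k_2}$. This establishes (\ref{2eq3}) at stage $s$. Formula (\ref{2eq4}) follows by an identical induction with (\ref{2eqp4}) replacing (\ref{2eqp2}); the minus signs carried by (\ref{2eqp4}) propagate to give the $(-x)^{k_1}(-y)^{k_2}$ factors, while the Pascal combination is unchanged.

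The principal technical obstacle is the non-commutative bookkeeping at the simplification step, which is precisely what the commutativity hypotheses in the theorem are designed to handle. Applying $P(s-1)$ to the $x$-piece produces inner factors $[A^i+I]_{l_1+l_2}$, $[B+I]_{l_1}$, $[D+I]^{-1}_{l_1+l_2}$, and $[E+I]^{-1}_{l_1}$; I rewrite them using the elementary identity $(X+I)_n=X^{-1}(X)_{n+1}$, which is legitimate because the positive stability of $A_j,B_l,D_l,E_l$ guarantees their invertibility. The outer prefactor $[A^i][B]$ from (\ref{2eqp2}) must then be slid past the inner matrices so as to cancel the $[A^i]^{-1}$ and $[B]^{-1}$ thereby produced, which uses exactly the commutativity relations $A_iB_j=B_jA_i$, $A_iC_j=C_jA_i$, and $B_iC_j=C_jB_i$. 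On the right, an analogous rearrangement using $D_iE_j=E_jD_i$, $F_jD_i=D_iF_j$, and $F_iE_j=E_jF_i$ cancels the postfactor $[D]^{-1}[E]^{-1}$ against the $[D]$ and $[E]$ that emerge from inverting $[D+I]^{-1}$ and $[E+I]^{-1}$. Once these cancellations are carried through, each summand matches the right-hand side of (\ref{2eq3}) verbatim, and the induction closes.
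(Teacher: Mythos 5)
Your proposal is correct and follows essentially the same route as the paper: induction on $s$, with the one-step contiguous relations (\ref{2eqp2}) and (\ref{2eqp4}) supplying the base case and the inductive step, and the trinomial Pascal identity merging the three resulting sums into $\binom{s}{k_1,k_2}$. The only (immaterial) difference is the order of composition—you peel off the single contiguous step at the outer level before invoking the inductive hypothesis, whereas the paper substitutes $A_i\to A_i+I$ into the hypothesis at level $t$ and then applies (\ref{2eqp2}) to each summand—and your explicit treatment of the commutativity bookkeeping via $(X+I)_n=X^{-1}(X)_{n+1}$ is a detail the paper leaves implicit.
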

\begin{proof}  
We prove (\ref{2eq3}) by applying mathematical induction on $s$. For $s=1$, the result (\ref{2eq3}) is true by (\ref{2eqp2}). Assuming (\ref{2eq3}) is true for $s=t$, that is, 
\begin{align}
&F^{m_1; n_1, n'_1}_{m_2; n_2, n'_2}\left(^{A^{i}, \,A_i+tI: B, C}_{\,\,\,\,\,\,\,\,D\,\,\,\,\,\,\,\,\,:\, E,\, F}; x, y\right)\notag\\&=\sum_{k_1+k_2\leq t}^{}{t\choose k_1, k_2}{[A^{i}]_{k_1+k_2}[B]_{k_1}}[C]_{k_2}\, \notag\\&\times x^{k_1}y^{k_2} F^{m_1; n_1, n'_1}_{m_2; n_2, n'_2}\left(^{ A+(k_1+k_2)I: B+k_1I, C+k_2I}_{D+(k_1+k_2)I:\, E+k_1 I, F+k_2I}; x, y\right){[D]^{-1}_{k_1+k_2}[E]^{-1}_{k_1}[F]^{-1}_{k_2}}.
\label{2eqpp11}
\end{align}
Replacing $A_i$ with $A_i+I$ in (\ref{2eqpp11}) and using the contiguous matrix  relation 
(\ref{2eqp2}) and simplifying, we get 
 \begin{align}
 &F^{m_1; n_1, n'_1}_{m_2; n_2, n'_2}\left(^{A^{i},\, A_i+tI+I: B, C}_{\,\,\,\,\,\,\,\,D\,\,\,\,\,\,\,\,\,:\, E,\, F}; x, y\right)\notag\\
 &=\sum_{k_1+k_2\leq t}^{}{t\choose k_1, k_2}{[A^{i}]_{k_1+k_2}[B]_{k_1}}[C]_{k_2}\, x^{k_1} y^{k_2}\,\notag\\
 &\quad F^{m_1; n_1, n'_1}_{m_2; n_2, n'_2}\left(^{ A+(k_1+k_2)I: B+k_1I, C+k_2I}_{D+(k_1+k_2)I:\, E+k_1 I, F+k_2I}; x, y\right){[D]^{-1}_{k_1+k_2}[E]^{-1}_{k_1}[F]^{-1}_{k_2}}\notag\\
 &\quad +\sum_{k_1+k_2\leq t+1}^{}{t\choose k_1-1, k_2}{[A^{i}]_{k_1+k_2}[B]_{k_1}}[C]_{k_2}\, x^{k_1} y^{k_2}\notag\\&\quad\times F^{m_1; n_1, n'_1}_{m_2; n_2, n'_2}\left(^{ A+(k_1+k_2)I: B+k_1I, C+k_2I}_{D+(k_1+k_2)I:\, E+k_1 I, F+k_2I}; x, y\right){[D]^{-1}_{k_1+k_2}[E]^{-1}_{k_1}[F]^{-1}_{k_2}}\notag\\
 &\quad +\sum_{k_1+k_2\leq t+1}^{}{t\choose k_1, k_2-1}{[A^{i}]_{k_1+k_2}[B]_{k_1}}[C]_{k_2}\, x^{k_1} y^{k_2}\notag\\&\quad\times  F^{m_1; n_1, n'_1}_{m_2; n_2, n'_2}\left(^{ A+(k_1+k_2)I: B+k_1I, C+k_2I}_{D+(k_1+k_2)I:\, E+k_1 I, F+k_2I}; x, y\right){[D]^{-1}_{k_1+k_2}[E]^{-1}_{k_1}[F]^{-1}_{k_2}}.
\end{align}

Apply the known relation ${n\choose k}+{n\choose k-1}={n+1\choose k}$ and ${n\choose k}=0$\, ($k>n$ or $k<0$), the above identity can be reduced to the following result:
\begin{align}
&F^{m_1; n_1, n'_1}_{m_2; n_2, n'_2}\left(^{A^{i},\, A_i+tI+I: B, C}_{\,\,\,\,\,\,\,\,D\,\,\,\,\,\,\,\,\,:\, E,\, F}; x, y\right)\notag\\&=\sum_{k_1+k_2\leq t+1}^{}{t+1\choose k_1, k_2}{[A^{i}]_{k_1+k_2}[B]_{k_1}}[C]_{k_2}\, \notag\\&\times x^{k_1}y^{k_2} F^{m_1; n_1, n'_1}_{m_2; n_2, n'_2}\left(^{ A+(k_1+k_2)I: B+k_1I, C+k_2I}_{D+(k_1+k_2)I:\, E+k_1 I, F+k_2I}; x, y\right){[D]^{-1}_{k_1+k_2}[E]^{-1}_{k_1}[F]^{-1}_{k_2}}.
\label{2eqpp1}
\end{align}
This establishes (\ref{2eq3}) for $s= t+1$.  Hence result (\ref{2eq3}) is true for all values of $s$. The second recursion formula (\ref{2eq4}) is proved in a similar manner.
\end{proof} 

 Now, we present the recursion formulas for matrix $B_i$, $C_i$ of the Kamp\'e de F\'eriet hypergeometric matrix function. We omit the proofs of the given below theorems.
\begin{theorem}\label{rth12}Let $B_i+sI, i=1,\dots, n_1$ be  invertible  for all integers $s\geq0$.    Then the following recursion formula holds  true for the  Kamp\'e de F\'eriet hypergeometric matrix function:
\begin{align}
&F^{m_1; n_1, n'_1}_{m_2; n_2, n'_2}\left(^{A:B^{i},\, B_i+sI, C}_{D:\, E,\, F}; x, y\right)\notag\\&=F^{m_1; n_1, n'_1}_{m_2; n_2, n'_2}\left(^{A: B, C}_{D: E, F}; x, y\right)\notag\\&\quad + x[A][B^{i}]\sum_{k=1}^{s}F^{m_1; n_1, n'_1}_{m_2; n_2, n'_2}\left(^{A+I: B^{i}+I, \,B_i+kI, C}_{D+I :\, E+I,\, F}; x, y\right)[D]^{-1}[E]^{-1}.
\label{2eq5}
\end{align}
Furthermore, if $B_i-kI$ is invertible for integers $k\leq s$, then
\begin{align}
&F^{m_1; n_1, n'_1}_{m_2; n_2, n'_2}\left(^{A:B^{i},\, B_i-sI, C}_{D:\, E,\, F}; x, y\right)\notag\\&=F^{m_1; n_1, n'_1}_{m_2; n_2, n'_2}\left(^{A: B, C}_{D: E, F}; x, y\right)\notag\\&\quad - x[A][B^{i}]\sum_{k=0}^{s-1}F^{m_1; n_1, n'_1}_{m_2; n_2, n'_2}\left(^{A+I: B^{i}+I, \,B_i-kI, C}_{D+I :\, E+I,\, F}; x, y\right)[D]^{-1}[E]^{-1},
\label{2eq6}
\end{align}
where $A_i$, $B_i$, $C_i$, $D_i$, $E_i$, $F_i$ are positive stable matrices in $\mathbb{C}^{r\times r}$  such that $A_i B_j=B_j A_i$; $B_i B_j= B_j B_i$;  $ F_j D_{i}= D_{i} F_j$; $D_i E_j=E_j D_i$;   $F_i E_j=E_j F_i$ and $D_i+kI$, $E_i+kI$ and $F_i+kI$ are invertible for all integers $k\geq 0$.
\end{theorem}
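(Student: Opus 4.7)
My plan is to mirror the proof strategy of the first theorem in this section (the one treating shifts of the parameter $A_i$); the argument here is in fact structurally simpler, since the matrix $B_i$ appears in the defining series (\ref{1eq6}) only through the single-index Pochhammer $(B_i)_m$, never through the mixed $(B_i)_{m+n}$. Consequently the base contiguous relation produces only an $x$-correction, which explains why (\ref{2eq5}) and (\ref{2eq6}) contain no $y$-term (compare with (\ref{2eq1}) and (\ref{2eq2})).

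The first step is to derive the base contiguous relation, that is, the $s=1$ case of (\ref{2eq5}). The key identity is
\[
(B_i+I)_m = B_i^{-1}(B_i)_m(B_i+mI) = (B_i)_m + m\,(B_i)_m\,B_i^{-1},
\]
which follows from $(B_i)_{m+1} = B_i(B_i+I)_m = (B_i)_m(B_i+mI)$. Substituting this into the defining series for $F(A: B^i, B_i+I, C; D: E, F; x, y)$ splits it into two sums. The first reproduces $F(A: B, C; D: E, F; x, y)$. The second carries a factor $m$, so the $m=0$ term vanishes; relabeling $m \mapsto m+1$ converts $1/(m+1)!$ into $1/m!$, frees an extra factor $x$, and the Pochhammer identities $(A_j)_{m+n+1} = A_j(A_j+I)_{m+n}$, $(B_j)_{m+1} = B_j(B_j+I)_m$ for $j \neq i$, $(B_i)_{m+1}B_i^{-1} = (B_i+I)_m$, $(D_j)^{-1}_{m+n+1} = (D_j+I)^{-1}_{m+n} D_j^{-1}$, and $(E_j)^{-1}_{m+1} = (E_j+I)^{-1}_m E_j^{-1}$ reassemble it as $x\,[A][B^i]\,F(A+I: B^i+I, B_i+I, C; D+I: E+I, F; x, y)\,[D]^{-1}[E]^{-1}$.

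The second step is induction on $s$, following the same iteration scheme already used to pass from (\ref{2eqp2}) to (\ref{2eq1}) and from (\ref{2eqp4}) to (\ref{2eq2}). Replacing $B_i$ by $B_i+tI$ in the base relation (permitted because $B_i+tI$ is invertible for all $t\geq 0$) and combining with the inductive hypothesis telescopes the sum from $k=1,\dots,t$ to $k=1,\dots,t+1$, establishing (\ref{2eq5}) for $s=t+1$. The companion formula (\ref{2eq6}) is obtained by replacing $B_i$ with $B_i-I$ in the base relation, solving for $F(A: B^i, B_i-I, C; D: E, F; x, y)$, and iterating downward by further substitutions $B_i \mapsto B_i-kI$ for $k=1,\dots,s-1$; the invertibility hypothesis on each $B_i - kI$ makes every step legitimate.

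The only genuinely delicate point is non-commutativity. The factor $B_i^{-1}$ produced by the Pochhammer manipulation must be slid past $\prod(A_j)_{m+n}$, $\prod_{j\neq i}(B_j)_m$, and $(C_i)_n$ before it can be absorbed into $(B_i+I)_m$; similarly the collected prefactors $[A]$ and $[B^i]$ must be moved to the left of the shifted $F$, while $[D]^{-1}$ and $[E]^{-1}$ must be moved to the right. The commutativity hypotheses $A_iB_j=B_jA_i$, $B_iB_j=B_jB_i$, $D_iE_j=E_jD_i$, $F_jD_i=D_iF_j$, $F_iE_j=E_jF_i$ stated in the theorem are precisely what enables these reorderings; without them (\ref{2eq5}) and (\ref{2eq6}) would not even make sense as written. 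Everything beyond this is routine bookkeeping identical in spirit to the arguments already given.
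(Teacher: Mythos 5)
The paper itself omits the proof of this theorem, indicating only that it parallels the argument for the $A_i$-recursion; your proposal is precisely that intended argument, deriving the base contiguous relation from $(B_i+I)_m = (B_i)_m + m\,(B_i)_m B_i^{-1}$ (the single-index analogue of the paper's relation (\ref{2eqp1})) and iterating exactly as in the passage from (\ref{2eqp2}) to (\ref{2eq1}). The derivation is correct, including the observation that the absence of $(B_i)_{m+n}$ accounts for the missing $y$-term, so nothing further is needed.
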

\begin{theorem}\label{rth13}
  Let $B_i+sI, i=1,\dots, n_1$ be  invertible  for all  integers $s\geq0$. 
 Then the following recursion formula holds  true for the  Kamp\'e de F\'eriet hypergeometric matrix function:
\begin{align}
F^{m_1; n_1, n'_1}_{m_2; n_2, n'_2}\left(^{A: B^{i}, \,B_i+sI, C}_{D:\,\,\,\,\,\,\, E,\,\,\, F}; x, y\right)&=\sum_{k=0}^{s}{s\choose k}{[A]_{k}[B^{i}]_{k}}\, x^{k}\notag\\&\times F^{m_1; n_1, n'_1}_{m_2; n_2, n'_2}\left(^{ A+kI: B+kI, C}_{D+kI:\, E+kI,\, F}; x, y\right){[D]^{-1}_{k}[E]^{-1}_{k}},
\label{2eq7}
\end{align}
Furthermore, if $B_i-kI$ is invertible for integers $k\leq s$, then
\begin{align}
F^{m_1; n_1, n'_1}_{m_2; n_2, n'_2}\left(^{A : B^{i}, \,B_i-sI, C}_{\,D:\,\,\,\, E,\,\,\, F}; x, y\right)&=\sum_{k=0}^{s}{s\choose k}{[A]_{k}[B^{i}]_{k}}\, (-x)^{k}\notag\\&\times F^{m_1; n_1, n'_1}_{m_2; n_2, n'_2}\left(^{ A+kI : B^{i}+kI,\, B_i, C}_{D+kI:\, E+kI,\, F}; x, y\right){[D]^{-1}_{k}[E]^{-1}_{k}}.\label{2eq8}
\end{align}
where $A_i$, $B_i$, $C_i$, $D_i$, $E_i$, $F_i$ are positive stable matrices in $\mathbb{C}^{r\times r}$  such that $A_i B_j=B_j A_i$; $B_i B_j= B_j B_i$; $F_j D_{i}= D_{i} F_j$; $D_i E_j=E_j D_i$;   $F_i E_j=E_j F_i$ and $D_i+kI$, $E_i+kI$ and $F_i+kI$ are invertible for all integers $k\geq 0$.
\end{theorem}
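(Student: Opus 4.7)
The plan is to mimic the induction proof of Theorem~\ref{rth21} (equations (\ref{2eq3}), (\ref{2eq4})) carried out earlier, adapting it from shifts of $A_i$ to shifts of $B_i$. The conceptual simplification is that $B_i$ appears in the series (\ref{1eq6}) only through $(B_i)_m$, paired with $x^m$ alone, so the double index $(k_1,k_2)$ of Theorem~\ref{rth21} collapses to a single $k$ and the $y$-branch disappears. Consequently (\ref{2eq7}) is a genuinely one-variable binomial identity rather than a trinomial one.

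First I would record the base contiguous matrix relation analogous to (\ref{2eqp2}) for a shift in $B_i$, namely
\begin{align*}
&F^{m_1; n_1, n'_1}_{m_2; n_2, n'_2}\left(^{A: B^{i},\, B_i+I, C}_{D:\,\,\,\, E,\,\,\, F}; x, y\right)\\
&\quad = F^{m_1; n_1, n'_1}_{m_2; n_2, n'_2}\left(^{A: B, C}_{D: E, F}; x, y\right)+ x[A][B^{i}]\,F^{m_1; n_1, n'_1}_{m_2; n_2, n'_2}\left(^{A+I: B+I, C}_{D+I: E+I, F}; x, y\right)[D]^{-1}[E]^{-1}.
\end{align*}
This is the $s=1$ instance of (\ref{2eq5}) and is obtained by substituting $(B_i+I)_m = (B_i)_m + m\,B_i^{-1}(B_i)_m$ into (\ref{1eq6}) and reindexing $m\mapsto m+1$ in the extra term; the pairwise commutativity assumptions make all matrix factorizations legitimate.

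Next I would prove (\ref{2eq7}) by induction on $s$. The case $s=0$ is immediate (only the $k=0$ summand survives). For the inductive step, assume (\ref{2eq7}) at stage $s=t$ and replace $B_i$ by $B_i+I$ throughout; on the left this yields $F(\ldots;B^i,B_i+(t+1)I,C;\ldots)$, while on the right every summand is turned into a Kamp\'e de F\'eriet function whose $B_i$-entry is now $B_i+(k+1)I$. Apply the base relation above to each such summand, with $B$ replaced by $B+kI$ (and $A,D,E$ shifted accordingly). The resulting products simplify via $[A]_k[A+kI]=[A]_{k+1}$, $[B^i]_k[B^i+kI]=[B^i]_{k+1}$, $[D]^{-1}_k[D+kI]^{-1}=[D]^{-1}_{k+1}$, $[E]^{-1}_k[E+kI]^{-1}=[E]^{-1}_{k+1}$, which follow componentwise from $(X)_k(X+kI)=(X)_{k+1}$. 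After a shift of index $k\mapsto k-1$ in the ``new'' sum and Pascal's identity $\binom{t}{k}+\binom{t}{k-1}=\binom{t+1}{k}$, the two sums merge into the desired $\binom{t+1}{k}$-sum, establishing (\ref{2eq7}) at $s=t+1$.

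The formula (\ref{2eq8}) is proved by exactly the same induction, but seeded with the descending base contiguous relation obtained by replacing $B_i$ with $B_i-I$ in the relation above (analogous to passing from (\ref{2eqp2}) to (\ref{2eqp4})); this accounts for the signs $(-x)^k$ and for the fact that the innermost $B_i$ inside the $F$ on the right is left unshifted. The main obstacle in either case is purely bookkeeping: keeping the shifted factorial products in an order compatible with the many pairwise commutativity hypotheses so that the telescoping and Pascal-style recombination go through; there is no conceptual novelty beyond Theorem~\ref{rth21}.
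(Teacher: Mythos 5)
Your proposal is correct and follows exactly the route the paper intends: the paper omits the proof of this theorem, but your argument --- establish the $B_i$ contiguous relation from $(B_i+I)_m=(B_i)_m+mB_i^{-1}(B_i)_m$, then induct on $s$ using the shifted-factorial identities $(X)_k(X+kI)=(X)_{k+1}$ and Pascal's rule --- is precisely the proof given for the $A_i$-analogue (Theorem~\ref{rth21}), simplified to a single summation index since $B_i$ enters only through $(B_i)_m x^m$. No gaps beyond the ordering bookkeeping you already flag.
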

The recursion formulas for $F^{m_1; n_1, n'_1}_{m_2; n_2, n'_2}\left(^{A:  B, C^{i}, \,C_i\pm sI}_{D:\,\,\, E,\,\,\, F}; x, y\right)$  are obtained by replacing $B \leftrightarrow C$, $E \leftrightarrow F$ and  $x\leftrightarrow y$ in Theorems~\ref{rth12} --~\ref{rth13}.\\

 Next, we present the recursion formulas for matrix  $D_i$ of the Kamp\'e de F\'eriet hypergeometric matrix function. 
\begin{theorem}
Let $D_i-sI, i=1,\dots, m_2$ be  invertible  for all integers $s\geq0$.  Then the following recursion formula holds  true for the  Kamp\'e de F\'eriet hypergeometric matrix function:
\begin{align}
&F^{m_1; n_1, n'_1}_{m_2; n_2, n'_2}\left(^{\,\,\,\,\,\,\,\,\,\,\,\,A\,\,\,\,\,\,\,\,\,\,\,\,: B, C}_{D^{i},\, D_i-sI\,\,:\, E,  F}; x, y\right)\notag\\&=F^{m_1; n_1, n'_1}_{m_2; n_2, n'_2}\left(^{A: B, C}_{D: E, F}; x, y\right)\notag\\&+ x[A][B]\Big[\sum_{k=1}^{s}F^{m_1; n_1, n'_1}_{m_2; n_2, n'_2}\left(^{\,\,\,\,\,\,\,\,\,\,\,\,\,\,A+I\,\,\,\,\,\,\,\,\,\,\,\,\,: B+I, C}_{D^{i}+I, \,D_i+(2-k)I:\, E+I,\, F}; x, y\right)(D_i-kI)^{-1}(D_i-(k-1)I)^{-1}\Big][D^{i}]^{-1}[E]^{-1}\notag\\
&+ y[A][C]\Big[\sum_{k=1}^{s}F^{m_1; n_1, n'_1}_{m_2; n_2, n'_2}\left(^{\,\,\,\,\,\,\,\,\,\,\,\,\,\,\,\,\,A+I\,\,\,\,\,\,\,\,\,\,: B, C+I}_{D^{i}+I, \,D_i+(2-k)I:\, E,\, F+I}; x, y\right)(D_i-kI)^{-1}(D_i-(k-1)I)^{-1}\Big][D^{i}]^{-1}[F]^{-1},\label{2eq9}
\end{align}
where $A_i$, $B_i$, $C_i$, $D_i$, $E_i$, $F_i$ are positive stable matrices in $\mathbb{C}^{r\times r}$  such that $A_i B_j=B_j A_i$; $A_i C_{j}= C_{j} A_i$;   $B_i C_{j}= C_{j} B_i$; $F_i E_j=E_j F_i$; $D_i D_j= D_j D_i$; $F_j D_{i}= D_{i} F_j$; $D_i E_j=E_j D_i$ and $D_i+kI$, $E_i+kI$ and $F_i+kI$ are invertible for all integers $k\geq 0$.
\end{theorem}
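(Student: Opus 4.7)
The plan is to establish the $s=1$ contiguous matrix relation for the denominator parameter (the analogue of (\ref{2eqp2})) and then iterate it $s$ times. The key algebraic identity I start from is the one-matrix shift
\[
(D_i - I)^{-1}_{m+n} - (D_i)^{-1}_{m+n} = (m+n)\,(D_i)^{-1}_{m+n}(D_i - I)^{-1},
\]
which is obtained from $(D_i-I)_{m+n}=(D_i-I)(D_i)_{m+n}(D_i+(m+n-1)I)^{-1}$ by inversion, once one writes $D_i+(m+n-1)I=(D_i-I)+(m+n)I$. Every factor appearing here is a rational function of the single matrix $D_i$, so all such factors commute among themselves.

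Substituting this identity into the series definition (\ref{1eq6}) expresses $F(D^i,D_i-I)-F(D^i,D_i)$ as a series whose summand carries the extra factor $(m+n)(D_i)^{-1}_{m+n}(D_i-I)^{-1}$. Because $(D_i-I)^{-1}$ commutes with each of $D_j$, $E_j$, $F_j$ by the hypothesized relations $D_iD_j=D_jD_i$, $D_iE_j=E_jD_i$, $F_jD_i=D_iF_j$, I move it past the remaining denominator block to the outside of the sum on the right. I then split $m+n$ as $m+n$; the $m$-part is handled via $m/m!=1/(m-1)!$ and the reindexing $m\to p+1$, and analogously for the $n$-part. Applying the Pochhammer shifts
\[
(A_j)_{p+n+1}=A_j(A_j+I)_{p+n},\quad (B_j)_{p+1}=B_j(B_j+I)_p,\quad (D_j)^{-1}_{p+n+1}=(D_j+I)^{-1}_{p+n}D_j^{-1},
\]
and the analogous shifts for $E_j$ in the $m$-part (or for $F_j$ and $C_j$ in the $n$-part), the two pieces repackage into Kampé de Fériet functions with parameters $A+I:B+I,C$ over $D+I:E+I,F$ and $A+I:B,C+I$ over $D+I:E,F+I$, carrying the prefactors $x[A][B]$ and $y[A][C]$ respectively. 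A final commutation of the rational-in-$D_i$ block $D_i^{-1}(D_i-I)^{-1}$ past $[D^i]^{-1}$ and $[E]^{-1}$ (or $[F]^{-1}$) places it in the position required by the $k=1$ summand of (\ref{2eq9}).

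With the base relation in hand, I iterate. Replacing $D_i$ by $D_i-I$ in the base relation yields an expression for $F(D^i,D_i-2I)$ in terms of $F(D^i,D_i-I)$ plus a new correction whose inner denominator parameter is $D_i+(2-2)I=D_i$ and whose accompanying factor is $(D_i-2I)^{-1}(D_i-I)^{-1}$. Telescoping the $s$ single-step relations produces exactly the sum $\sum_{k=1}^{s}$ in (\ref{2eq9}); the $k$-th accumulated term carries inner denominator $D_i+(2-k)I$ and rational factor $(D_i-kI)^{-1}(D_i-(k-1)I)^{-1}$. Formally this is an induction on $s$ of the same shape as the passage from (\ref{2eqp2}) to (\ref{2eqp3}) in the proof of (\ref{2eq1}).

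The main obstacle is bookkeeping of the non-commuting factors. At the base step one must verify that the rational-in-$D_i$ block $D_i^{-1}(D_i-I)^{-1}$ can be brought to its stated position (to the right of the inner $F$ and to the left of $[D^i]^{-1}[E]^{-1}$ or $[D^i]^{-1}[F]^{-1}$) using only the commutativity hypotheses of the theorem; the stated conditions that $D_i$ commutes with each $D_j$, $E_j$ and $F_j$ are precisely what is needed. Once the placement is correct at the base step, the iteration preserves it automatically.
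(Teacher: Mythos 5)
Your proposal is correct and follows essentially the same route as the paper: you derive the one-step contiguous relation for $F\bigl(D^{i},D_i-I\bigr)$ from the identity $(D_i-I)^{-1}_{m+n}=(D_i)^{-1}_{m+n}\bigl[I+(m+n)(D_i-I)^{-1}\bigr]$ (your rearranged form is the same identity the paper uses), and then iterate it $s$ times, with the same bookkeeping that produces the inner parameter $D_i+(2-k)I$ and the factor $(D_i-kI)^{-1}(D_i-(k-1)I)^{-1}$ at step $k$. You simply spell out more of the series manipulation (splitting $m+n$, reindexing, Pochhammer shifts) than the paper does.
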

\begin{proof} Applying the definition of the Kamp\'e de F\'eriet hypergeometric matrix function and the relation
\begin{align*}
{(D_i-I)^{-1}_{m+n}}={(D_i)^{-1}_{m+n}}\left[1+{m}{(D_i-I)^{-1}}+{n}{(D_i-I)^{-1}}\right],
\end{align*} 
we obtain the following contiguous  matrix relation:
\begin{align}
&F^{m_1; n_1, n'_1}_{m_2; n_2, n'_2}\left(^{\,\,\,\,\,\,\,\,\,\,\,\,A\,\,\,\,\,\,\,\,\,\,\,\,: B, C}_{D^{i}, \,D_i-I\,\,:\, E,  F}; x, y\right)\notag\\&=F^{m_1; n_1, n'_1}_{m_2; n_2, n'_2}\left(^{A: B, C}_{D: E, F}; x, y\right)\notag\\&+ x[A][B]\Big[F^{m_1; n_1, n'_1}_{m_2; n_2, n'_2}\left(^{\,A+I\,: B+I, C}_{D+I:\, E+I,\, F}; x, y\right)(D_i-I)^{-1}\Big][D]^{-1}[E]^{-1}\notag\\
&+ y[A][C]\Big[F^{m_1; n_1, n'_1}_{m_2; n_2, n'_2}\left(^{\,A+I\,: B, C+I}_{D+I:\, E,\, F+I}; x, y\right)(D_i-I)^{-1}\Big][D]^{-1}[F]^{-1}.
\end{align}
Using this contiguous matrix relation to the  Kamp\'e de F\'eriet hypergeometric matrix function with the matrix $D_i-sI$ for $s$ times, we get (\ref{2eq9}). 
\end{proof} 

Next, we will present recursion formulas for the  Kamp\'e de F\'eriet hypergeometric matrix function $E_{i}$, $F_{i}$. We omit the proof of the given below theorem.
\begin{theorem}\label{sa1}
Let $E_i-sI, i=1,\dots, n_2$ be  invertible  for all integers $s\geq0$.  Then the following recursion formula holds  true for the  Kamp\'e de F\'eriet hypergeometric matrix function:
\begin{align}
&F^{m_1; n_1, n'_1}_{m_2; n_2, n'_2}\left(^{\,A:\,\,\,\,\, B,\,\,\,\, C}_{D:\,E^{i},\, E_i-sI,  F}; x, y\right)\notag\\&=F^{m_1; n_1, n'_1}_{m_2; n_2, n'_2}\left(^{A: B, C}_{D: E, F}; x, y\right)\notag\\&+ x[A][B]\Big[\sum_{k=1}^{s}F^{m_1; n_1, n'_1}_{m_2; n_2, n'_2}\left(^{A+I\,: \,\,B+I,\,\, C}_{D+I:\,E^{i}+I, \, E_i+(2-k)I,\, F}; x, y\right)(E_i-kI)^{-1}(E_i-(k-1)I)^{-1}\Big][E^{i}]^{-1}[D]^{-1},\label{2eq10}
\end{align}
where $A_i$, $B_i$, $C_i$, $D_i$, $E_i$, $F_i$ are positive stable matrices in $\mathbb{C}^{r\times r}$  such that $A_i B_j=B_j A_i$; $E_i E_j= E_j E_i$; $F_i E_j=E_j F_i$; $F_j D_{j}= D_{j} F_i$; $D_j E_i=E_i D_j$ and $D_i+kI$, $E_i+kI$ and $F_i+kI$ are invertible for all integers $k\geq 0$.
\end{theorem}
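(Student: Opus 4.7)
The plan is to adapt the proof of the preceding $D_i$-recursion theorem, with the essential difference that $E_i$ enters the summand of the Kamp\'e de F\'eriet series only through the factor $(E_i)^{-1}_m$ (subscript $m$, not $m+n$). Consequently only an $x$-derived contribution appears in the recursion, explaining why (\ref{2eq10}) has no $y$-summand while (\ref{2eq9}) does.

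First I would establish the base ($s=1$) contiguous relation. From $(E_i-I)_m = (E_i-I)(E_i)_{m-1}$ together with $(E_i)_m = (E_i)_{m-1}(E_i+(m-1)I)$ and the commutativity hypotheses, one extracts
\begin{equation*}
(E_i-I)^{-1}_m = (E_i)^{-1}_m\bigl[I + m(E_i-I)^{-1}\bigr],
\end{equation*}
which is the $E$-analogue of the identity used in the proof of (\ref{2eq9}). Substituting into the series (\ref{1eq6}) with $E_i$ replaced by $E_i-I$ splits the sum: the $I$-part reproduces $F^{m_1;n_1,n'_1}_{m_2;n_2,n'_2}\!\left(^{A:B,C}_{D:E,F};x,y\right)$, and in the residual piece the factor $m$ is absorbed via $m/m!=1/(m-1)!$ together with a reindexing $m\mapsto m+1$. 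Applying the standard shift identities $(M)_{m+1+n}=M(M+I)_{m+n}$ and $(M)_{m+1}=M(M+I)_m$ (and their inverse forms) under the stated commutativity then yields the $s=1$ relation, in which the inner KdF carries $E_i+I$ in place of $E_i$ and is flanked by the scalar $x$, the factors $[A][B]$ on the left, and $(E_i-I)^{-1}(E_i)^{-1}[E^i]^{-1}[D]^{-1}$ on the right.

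Next I would iterate. Substituting $E_i\mapsto E_i-I$ in the base relation produces the analogous identity between the KdF function at $E_i-2I$ and at $E_i-I$; proceeding in this fashion $s$ times and telescoping the successive differences $F(E_i-kI)-F(E_i-(k-1)I)$ for $k=1,\dots,s$ yields the summand of (\ref{2eq10}): the inner function acquires $E_i+(2-k)I$ (the outcome of shifting $E_i-(k-1)I$ by $+I$) together with the outside factor $(E_i-kI)^{-1}(E_i-(k-1)I)^{-1}$. A straightforward induction on $s$ formalises the telescoping.

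The main obstacle is the non-commutativity bookkeeping. Each rearrangement that peels off $x$, $[A]$, $[B]$, $[D]^{-1}$, $[E^i]^{-1}$, $(E_i)^{-1}$, and $(E_i-I)^{-1}$ must be justified by invoking the explicit hypotheses $E_iE_j=E_jE_i$, $F_iE_j=E_jF_i$, $D_jE_i=E_iD_j$, together with the $A$-$B$ and $D$-$F$ commutations already in force in the earlier theorems; one needs in particular that $(E_i)^{-1}_m$ commutes with $[E^i]^{-1}_m$ and with $(E_i-I)^{-1}$ so that the final factored form on the right side of (\ref{2eq10}) is unambiguous. Once the base case is checked in this careful way, the inductive step is algebraically the same and introduces no new difficulty.
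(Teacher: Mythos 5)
Your proposal is correct and follows exactly the route the paper intends: the paper omits the proof of this theorem, referring implicitly to the preceding $D_i$-recursion, whose argument rests on the identity $(D_i-I)^{-1}_{m+n}=(D_i)^{-1}_{m+n}\bigl[I+(m+n)(D_i-I)^{-1}\bigr]$; your $E$-analogue $(E_i-I)^{-1}_{m}=(E_i)^{-1}_{m}\bigl[I+m(E_i-I)^{-1}\bigr]$, the resulting $s=1$ contiguous relation, and the $s$-fold telescoping are precisely that argument transplanted to $E_i$ (correctly noting that only the $x$-term survives because $E_i$ enters through $(E_i)^{-1}_{m}$ rather than $(E_i)^{-1}_{m+n}$). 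No discrepancies with the paper's method.
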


The recursion formulas for $F^{m_1; n_1, n'_1}_{m_2; n_2, n'_2}\left(^{\,A: B,\,\,\,\, C\,\,\,\,\,}_{D:E, F^{i},\, F_i-sI }; x, y\right)$  are obtained by replacing $B \leftrightarrow C$, $E \leftrightarrow F$ and  $x\leftrightarrow y$ in Theorem~\ref{sa1}.\\
\section{Finite matrix summation  formulas for the  Kamp\'e de F\'eriet hypergeometric matrix function by derivative operator}
In this section,  we  obtain the finite matrix sumation  formulas  for the  Kamp\'e de F\'eriet hypergeometric matrix function by derivative operator. The $p$-th derivative on $y$ of  Kamp\'e de F\'eriet hypergeometric matrix function is obtained as follows:
\begin{align}
& D_{y}^{p}\{F^{m_1; n_1, n'_1}_{m_2; n_2, n'_2}\left(^{A: B, C}_{D: E, F}; x, y\right)\}\notag\\
&=[A]_{p}[C]_{p}\,\,F^{m_1; n_1, n'_1}_{m_2; n_2, n'_2}\left(^{A+pI: B, C+pI}_{D+pI: E, F+pI}; x, y\right)[D]^{-1}_{p}[F]_{p}^{-1},\label{s1}
\end{align}
 where $A_i$, $B_i$, $C_i$, $D_i$, $E_i$, $F_i$ are positive stable matrices in $\mathbb{C}^{r\times r}$  such that  $A_iC_j=C_j A_i$;  $B_iC_j= C_j B_i$; $D_i E_j= E_j D_i$; $D_iF_j= F_j D_i$ and  $D_i+kI$, $E_i+kI$ and $F_i+kI$ are invertible for all integers $k\geq 0$.

By using the generalized Leibnitz formula
\begin{align*}
D_{y}^{p} \ \big(f(y) g(y)\big)=\sum_{k=0}^{p}{p\choose k} \ D_{y}^{p-k}f(y) \ D_{y}^{k}g(y)
\end{align*}
and  (\ref{s1}), we derive the following finite matrix summation formulas of   Kamp\'e de F\'eriet hypergeometric matrix function.
\begin{theorem}\label{t11} Let $A_i$, $B_i$, $C_i$, $D_i$, $E_i$, $F_i$  be positive stable matrices in $\mathbb{C}^{r\times r}$ such that   $A_iC_j=C_j A_i$;  $B_iC_j= C_j B_i$; $C_i C_j= C_j C_i$; $D_i E_j= E_j D_i$; $D_iF_j= F_j D_i$ and $D_i+kI$, $E_i+kI$ and $F_i+kI$ are invertible for all integers $k\geq 0$. Then the following finite matrix  summation formulas  hold for   Kamp\'e de F\'eriet hypergeometric matrix function:
\begin{align}
&\sum_{k=0}^{p}{p\choose k}{[A]_{k}[C^{i}]_{k}}\,y^{k}F^{m_1; n_1, n'_1}_{m_2; n_2, n'_2}\left(^{A+kI: B, C+kI}_{D+kI: E, F+kI}; x, y\right)[D]_{k}^{-1}[F]^{-1}_{k}\notag\\
&=F^{m_1; n_1, n'_1}_{m_2; n_2, n'_2}\left(^{A: B, C^{i},\, C_{i}+pI}_{D: E, F}; x, y\right).\label{1s21}
\end{align}
\end{theorem}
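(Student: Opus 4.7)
The plan is to apply the generalized Leibniz formula to the matrix-valued product
\[
h(y) := y^{C_i+(p-1)I}\,F^{m_1; n_1, n'_1}_{m_2; n_2, n'_2}\left(^{A: B, C}_{D: E, F}; x, y\right),
\]
and to compute $D_y^p h(y)$ in two different ways, which I then equate.

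First, a direct evaluation: expanding $F$ in its series (\ref{1eq6}) and differentiating term-by-term, each factor $y^{n+C_i+(p-1)I}$ yields $(C_i+nI)_p\,y^{n+C_i-I}$. Using the telescoping identity $(C_i)_n(C_i+nI)_p=(C_i)_{n+p}=(C_i)_p(C_i+pI)_n$, the decomposition $[C]_n=[C^i]_n(C_i)_n$, and the commutativity hypotheses $A_iC_j=C_jA_i$, $B_iC_j=C_jB_i$, $C_iC_j=C_jC_i$ to slide $(C_i)_p$ to the outside of the $A$, $B$, $C^i$ factors, I obtain
\[
D_y^p h(y) = (C_i)_p\,y^{C_i-I}\,F^{m_1; n_1, n'_1}_{m_2; n_2, n'_2}\left(^{A: B, C^i,\, C_i+pI}_{D: E, F}; x, y\right).
\]

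On the other hand, applying Leibniz together with the derivative $D_y^{p-k}\{y^{C_i+(p-1)I}\}=(C_i+kI)_{p-k}\,y^{C_i+(k-1)I}=(C_i)_p(C_i)_k^{-1}\,y^{C_i+(k-1)I}$ (which uses $(C_i)_k(C_i+kI)_{p-k}=(C_i)_p$ together with invertibility of $(C_i)_k$, guaranteed by positive-stability of $C_i$) and formula (\ref{s1}) for $D_y^k F$ gives
\[
D_y^p h(y) = \sum_{k=0}^{p}\binom{p}{k}(C_i)_p(C_i)_k^{-1}\,y^{C_i+(k-1)I}\,[A]_k[C]_k\,F^{m_1; n_1, n'_1}_{m_2; n_2, n'_2}\left(^{A+kI: B, C+kI}_{D+kI: E, F+kI}; x, y\right)\,[D]_k^{-1}[F]_k^{-1}.
\]
Equating the two expressions, multiplying on the left by $(C_i)_p^{-1}y^{I-C_i}$ (which commutes through $[A]_k$ and $[C^i]_k$ by the stated commutativity hypotheses), and using $(C_i)_k^{-1}[C]_k=[C^i]_k$ produces (\ref{1s21}).

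The main technical obstacle is bookkeeping the noncommuting matrices: since $C_i$ is not assumed to commute with $D_j$, $E_j$, or $F_j$, the factors $(C_i)_p$, $(C_i)_k^{-1}$, and $y^{C_i+(k-1)I}$ must be kept strictly to the left of $[D]_k^{-1}[E]_k^{-1}[F]_k^{-1}$ at every step and never pushed through them. This is automatic in both evaluations because these factors arise only from differentiating the prefactor $y^{C_i+(p-1)I}$, which sits on the far left of $h(y)$.
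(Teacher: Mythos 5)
Your proposal is correct and follows essentially the same route as the paper's proof: both compute $D_y^p\{y^{C_i+(p-1)I}F\}$ once term-by-term (yielding $(C_i)_p\,y^{C_i-I}$ times the right-hand side of (\ref{1s21}) via $(C_i)_n(C_i+nI)_p=(C_i)_p(C_i+pI)_n$) and once by the generalized Leibniz rule with (\ref{s1}), then equate and cancel $(C_i)_p\,y^{C_i-I}$ on the left. Your write-up is in fact slightly more explicit than the paper's about why the factors arising from the prefactor stay to the left and never need to pass through $[D]_k^{-1}[E]_k^{-1}[F]_k^{-1}$.
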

\begin{proof} From  definition of Kamp\'e de F\'eriet hypergeometric matrix function and the generalized Leibnitz formula for differentiation of a product of two functions, we have 
\begin{align*}
&D_{y}^{p}\{y^{C_{i}+pI-I}F^{m_1; n_1, n'_1}_{m_2; n_2, n'_2}\left(^{A: B, C}_{D: E, F}; x, y\right)\}\\
&= \sum_{k=0}^{p}{p\choose k}D_{y}^{p-k}\{y^{C_{i}+pI-I}\}D_{y}^{k}\{F^{m_1; n_1, n'_1}_{m_2; n_2, n'_2}\left(^{A: B, C}_{D: E, F}; x, y\right)\}\\
&=\,(C_{i})_{p}\, y^{C_{i}-I}\sum_{k=0}^{p}{p\choose k}{[A]_{k}[C^{i}]_{k}}\,\\
&\qquad\times y^{k}  F^{m_1; n_1, n'_1}_{m_2; n_2, n'_2}\left(^{A+kI: B, C+kI}_{D+kI: E, F+kI}; x, y\right)[D]^{-1}_{k}[F]^{-1}_{k},
\end{align*}
where, using  (\ref{s1}) and some simplification in the second equality. Next, we combine $y^{C_{i}+(p-1)I}$ with the variable $y$ in the Kamp\'e de F\'eriet hypergeometric matrix function and apply the derivative operator $p$-times on $y$ to get the following result:
\begin{align*}
&D_{y}^{p}\{y^{C_{i}+(p-1)I} F^{m_1; n_1, n'_1}_{m_2; n_2, n'_2}\left(^{A: B, C}_{D: E, F}; x, y\right)\}\\
&=\sum_{m, n=0}^{\infty}(C_{i}+nI)_{p}\prod_{i=1}^{m_1}(A_i)_{m+n} \prod_{i=1}^{n_1}(B_i)_{m} \prod_{i=1}^{n'_1}(C_i)_{n} \prod_{i=1}^{m_2}(D_i)^{-1}_{m+n} \prod_{i=1}^{n_2}(E_i)^{-1}_{m} \prod_{i=1}^{n'_2}(F_i)^{-1}_{n}\notag\\
&\times \frac{\, x^{m}y^{C_{i}+(n-1)I}}{m!n!}\\
&=(C_{i})_{p}\,y^{C_{i}-I}\,F^{m_1; n_1, n'_1}_{m_2; n_2, n'_2}\left(^{A: B, \,C_{i}+pI,\, C^{i}}_{D:\, E, \,F}; x, y\right).
\end{align*}
Equating the above two relations leads to (\ref{1s21}).
\end{proof}
\begin{theorem}Let $A_i$, $B_i$, $C_i$, $D_i$, $E_i$, $F_i$  be positive stable matrices in $\mathbb{C}^{r\times r}$ such that   $A_iC_j=C_j A_i$;  $B_iC_j= C_j B_i$; $D_i E_j= E_j D_i$; $D_iF_j= F_j D_i$; $F_i F_j= F_j F_i$  and $D_i+kI$, $E_i+kI$ and $F_i+kI$ are invertible for all integers $k\geq 0$. Then the following   finite matrix summation formulas of   Kamp\'e de F\'eriet hypergeometric matrix function hold true:
\begin{align}
&\sum_{k=0}^{p}{p\choose k}{[A]_{k}[C]_{k}}\,y^{k}\,F^{m_1; n_1, n'_1}_{m_2; n_2, n'_2}\left(^{A+kI: B, C+kI}_{D+kI: E, F+kI}; x, y\right){(F_{i}-pI)^{-1}_{k}[D]^{-1}_{k}[F]^{-1}_{k}}\notag\\
&=\,F^{m_1; n_1, n'_1}_{m_2; n_2, n'_2}\left(^{A: B, C}_{D: E, F_{i}-pI,\, F^{i}}; x, y\right).\label{s3}
\end{align}
where $F_i+(k-p)I$  is an invertible matrix for $0\leq k \leq p$ and $i=1,\dots,n'_{2}.$
\end{theorem}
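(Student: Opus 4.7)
\noindent The plan is to adapt the strategy of Theorem~\ref{t11}: compute
$D_{y}^{p}\bigl\{F^{m_{1};n_{1},n'_{1}}_{m_{2};n_{2},n'_{2}}\bigl({}^{A:B,C}_{D:E,F};x,y\bigr)\,y^{F_{i}-I}\bigr\}$
in two different ways (placing the weight $y^{F_{i}-I}$ on the right of $F$ is slightly more convenient here for handling the commutations). The weight is chosen so that, inside the series, $y^{n}\,y^{F_{i}-I}=y^{F_{i}+(n-1)I}$, whose $p$-fold $y$-differentiation produces the Pochhammer block $(F_{i}+(n-p)I)_{p}$, which is precisely what is needed to convert $(F_{i})^{-1}_{n}$ into $(F_{i}-pI)^{-1}_{n}$.

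\noindent \emph{First computation.} The generalized Leibniz formula, combined with (\ref{s1}) (applied with $p$ replaced by $k$) and the elementary fact $D_{y}^{p-k}\{y^{F_{i}-I}\}=(F_{i}-(p-k)I)_{p-k}\,y^{F_{i}-(p-k+1)I}$, gives a single sum over $0\le k\le p$. The telescoping identity $(F_{i}-pI)_{p}=(F_{i}-pI)_{k}\,(F_{i}-(p-k)I)_{p-k}$ rewrites $(F_{i}-(p-k)I)_{p-k}$ as $(F_{i}-pI)_{p}\,(F_{i}-pI)^{-1}_{k}$, while $y^{F_{i}-(p-k+1)I}=y^{F_{i}-pI-I}\,y^{k}$ detaches the scalar factor $y^{k}$. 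Using $F_{i}F_{j}=F_{j}F_{i}$ and $D_{i}F_{j}=F_{j}D_{i}$ to commute $(F_{i}-pI)^{-1}_{k}$ leftward past $[F]^{-1}_{k}$ and $[D]^{-1}_{k}$ into the slot between $F^{\ldots+kI}$ and $[D]^{-1}_{k}$, one recognises the left-hand side of (\ref{s3}) as the coefficient of the common right-factor $(F_{i}-pI)_{p}\,y^{F_{i}-pI-I}$.

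\noindent \emph{Second computation.} Expanding via the series (\ref{1eq6}) and using $y^{n}\,y^{F_{i}-I}=y^{F_{i}+(n-1)I}$, term-wise differentiation contributes the factor $(F_{i}+(n-p)I)_{p}\,y^{F_{i}+(n-p-1)I}$ to each term. Computing $(F_{i}-pI)_{n+p}$ by the two natural splittings yields the key identity $(F_{i}-pI)_{p}(F_{i})_{n}=(F_{i}-pI)_{n}(F_{i}+(n-p)I)_{p}$, which converts $(F_{i})^{-1}_{n}(F_{i}+(n-p)I)_{p}$ into $(F_{i}-pI)^{-1}_{n}(F_{i}-pI)_{p}$. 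Splitting $[F]^{-1}_{n}=[F^{i}]^{-1}_{n}(F_{i})^{-1}_{n}$ (which uses $F_{i}F_{j}=F_{j}F_{i}$) and pulling the common right-factor $(F_{i}-pI)_{p}\,y^{F_{i}-pI-I}$ outside the sum, the remaining series is precisely the right-hand side of (\ref{s3}).

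\noindent Equating the two computations and cancelling the common right-factor $(F_{i}-pI)_{p}\,y^{F_{i}-pI-I}$, which is invertible because each $F_{i}+(k-p)I$ with $0\le k\le p$ is invertible by hypothesis, establishes (\ref{s3}). The principal obstacle is the careful bookkeeping of matrix orderings; however, every matrix arising in the $F_{i}$-manipulations lies in the commutative subalgebra generated by $F_{i}$, and the hypotheses $F_{i}F_{j}=F_{j}F_{i}$ and $D_{i}F_{j}=F_{j}D_{i}$ are exactly what is needed to slide $(F_{i}-pI)^{-1}_{k}$ past $[F]^{-1}_{k}$ and $[D]^{-1}_{k}$ into the position prescribed by (\ref{s3}).
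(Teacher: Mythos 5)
Your proposal is correct and follows essentially the same route as the paper, which merely sketches the argument by deferring to the method of Theorem~\ref{t11}: apply $D_{y}^{p}$ to the product of the weight $y^{F_{i}-I}$ with the Kamp\'e de F\'eriet matrix function, evaluate once by the generalized Leibniz rule together with (\ref{s1}) and once by term-wise differentiation of the series, then equate and cancel the invertible common factor $(F_{i}-pI)_{p}\,y^{F_{i}-pI-I}$. Your explicit tracking of the commutations (in particular placing the weight on the right and using $F_{i}F_{j}=F_{j}F_{i}$, $D_{i}F_{j}=F_{j}D_{i}$ to position $(F_{i}-pI)^{-1}_{k}$) supplies exactly the details the paper omits.
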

\begin{proof} Applying  the derivative operator on $y^{F_{i}-I}$$F^{m_1; n_1, n'_1}_{m_2; n_2, n'_2}\left(^{A: B, C}_{D: E, F}; x, y\right)$,  $p$-times, gives the formula in this theorem as explained in the proof of Theorem \ref{t11}. We omit the details.\end{proof}

%
Applying derivative operator and some simple transformations,  we can get the finite matrix
summation formulas of Kamp\'e de F\'eriet hypergeometric matrix function as follows.

\begin{theorem}Let $A_i$, $B_i$, $C_i$, $D_i$, $E_i$, $F_i$  be positive stable matrices in $\mathbb{C}^{r\times r}$ such that   $A_iC_j=C_j A_i$;  $B_iC_j= C_j B_i$; $D_i E_j= E_j D_i$; $D_iF_j= F_j D_i$  and $D_i+kI$, $E_i+kI$ and $F_i+kI$ are invertible for all integers $k\geq 0$. Then the following   finite matrix summation formulas of   Kamp\'e de F\'eriet hypergeometric matrix function hold true:
\begin{align}
&\sum_{k=0}^{r}{r\choose k}{(-1)^{k}F^{m_1; n_1, n'_1}_{m_2; n_2, n'_2}\left(^{A: B, C}_{D: E, F_{i}-kI, F^{i}}; x, y\right)(I-F_{i})_{k}}{((2-r)I-F_{i})^{-1}_{k}}\notag\\
&={(-1)^{r}[A]_{r}[C]_{r}}\,y^{r} F^{m_1; n_1, n'_1}_{m_2; n_2, n'_2}\left(^{A+rI: B, C+rI}_{D+rI: E, F+rI}; x, y\right){(F_{i}-I)^{-1}_{r}[D]^{-1}_{r}[F]^{-1}_{r}};\label{s4}
\end{align}
\begin{align}
&\sum_{k=0}^{r}{r\choose k}{(-1)^{k}F^{m_1; n_1, n'_1}_{m_2; n_2, n'_2}\left(^{A: B, C}_{D: E, F_{i}+kI, F^{i}}; x, y\right)(F_{i}+(r-1)I)_{k}}{(F_{i})^{-1}_{k}}\notag\\
&={[A]_{r}[C]_{r}}\,y^{r}\,F^{m_1; n_1, n'_1}_{m_2; n_2, n'_2}\left(^{A+rI: B, C+rI}_{D+rI: E, F_{i}+2rI, F^{i}+rI}; x, y\right){[D]^{-1}_{r}[F]^{-1}_{r}(F_{i}+rI)^{-1}_{r}},\label{s5}
\end{align}
where $(2+k-r)I-F_{i}$ , $F_i-kI$ and $F_i+(k-1)I$ is an invertible matrix for $0\leq k\leq r$  in (\ref{s4});   $F_i+rI$  is an invertible matrix in (\ref{s5});  $i=1,\dots,n'_{2}.$
\end{theorem}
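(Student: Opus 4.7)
The plan is to parallel the proof of Theorem \ref{t11}: apply the $r$-th derivative operator $D_y^r$ to a suitably weighted Kamp\'e de F\'eriet matrix function of the form $y^{\alpha}\,F^{m_1;n_1,n'_1}_{m_2;n_2,n'_2}(^{A:B,C}_{D:E,F};x,y)$, and evaluate it in two ways. On one side, expand via the generalized Leibniz rule and insert the derivative formula (\ref{s1}) to produce a finite sum indexed by $k=0,\dots,r$. On the other side, differentiate the matrix series representation term-by-term and reassemble the Pochhammer factors to identify the result as a single Kamp\'e de F\'eriet matrix function with a shift in one of the $F_i$-parameters. Equating the two expressions will yield the claimed identities.

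For (\ref{s4}), I would take the weight $\alpha = F_i - I$. The Leibniz expansion produces terms $D_y^{r-k}\{y^{F_i-I}\}\,D_y^k\{F\}$, where
\[
D_y^{r-k}\{y^{F_i-I}\} = (-1)^{r-k}(I-F_i)_{r-k}\, y^{F_i-(r-k+1)I}.
\]
After the reindexing $k \leftrightarrow r-k$ (using $\binom{r}{r-k}=\binom{r}{k}$) and after combining the scaling factor $((2-r)I - F_i)^{-1}_{k}$ that comes from normalising the power of $y$, the sum rearranges into the left-hand side of (\ref{s4}). A direct term-by-term differentiation of $y^{F_i-I}F$ in its series form collapses $(F_i+(n-r)I)_r$ against the Pochhammer symbols $(F_i)^{-1}_n$ already present, producing after factor regrouping precisely the right-hand side of (\ref{s4}), the factor $(F_i-I)^{-1}_r$ arising from the lowest power of $y$ that survives the differentiation.

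For (\ref{s5}), the weight to use is $\alpha = F_i + (r-1)I$, applied to $F^{m_1;n_1,n'_1}_{m_2;n_2,n'_2}(^{A:B,C}_{D:E,F};x,y)$ whose $F_i$ parameter has first been relabelled so that the shifts align. The Leibniz rule produces a factor $(F_i+(r-1)I)_{r-k}$ from differentiating the weight, and $[A]_k[C]_k[D]^{-1}_k[F]^{-1}_k$ from (\ref{s1}). After the same change of index $k\leftrightarrow r-k$ and cancellation of $(F_i)^{-1}_k$ factors, this matches the left-hand side of (\ref{s5}). The direct series computation produces the right-hand side, where the double shift $F_i+2rI$ in the resulting Kamp\'e de F\'eriet function reflects the superposition of the pre-shift coming from $\alpha$ and the $+rI$ shift produced by $D_y^r$ via (\ref{s1}), and the extra factor $(F_i+rI)^{-1}_r$ arises exactly from this superposition.

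The main technical obstacle is the matrix Pochhammer bookkeeping, in particular verifying the scalar-style identity
\[
(I-F_i)_k\,(F_i-kI)^{-1}_k = (-1)^k I
\]
and analogous simplifications between $(F_i+(r-1)I)_k$, $(F_i)^{-1}_k$ and $(F_i+rI)_r$, together with justifying every reordering of matrix products by invoking the commutativity hypotheses $F_i F_j = F_j F_i$, $D_i F_j = F_j D_i$, $D_i E_j = E_j D_i$, etc. Because the $F_i$'s appear both as shifted arguments of the series and as Pochhammer coefficients, these commutativity assumptions are what make the two evaluations of $D_y^r\{y^{\alpha} F\}$ produce equal matrices rather than equal only up to reordering.
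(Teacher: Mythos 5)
Your general toolbox (the Leibniz rule, the derivative formula \eqref{s1}, and a weight of the form $y^{F_i-I}$) is the right one, but the way you split the product sends the computation to the wrong identity. If you apply Leibniz to $y^{F_i-I}\cdot F$ with the factors $\{y^{F_i-I}\}$ and $\{F\}$, then the $k$-th summand contains $D_y^k\{F\}$, which by \eqref{s1} is $[A]_k[C]_k\,F\bigl(^{A+kI:\,B,\,C+kI}_{D+kI:\,E,\,F+kI};x,y\bigr)[D]_k^{-1}[F]_k^{-1}$ — a function with \emph{all} of $A,C,D,F$ raised by $kI$. The left-hand side of \eqref{s4}, however, sums functions $F\bigl(^{A:\,B,\,C}_{D:\,E,\,F_i-kI,\,F^{i}};x,y\bigr)$ in which only the single parameter $F_i$ is lowered; no reindexing $k\leftrightarrow r-k$ or Pochhammer regrouping converts one family into the other. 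Likewise your ``direct term-by-term'' evaluation of $D_y^r\{y^{F_i-I}F\}$ collapses, after the reindexing you describe, to $(F_i-rI)_r\,y^{F_i-(r+1)I}\,F\bigl(^{A:\,B,\,C}_{D:\,E,\,F_i-rI,\,F^{i}};x,y\bigr)$ — a single function with $F_i$ lowered — not to the fully shifted function on the right of \eqref{s4}. In other words, the two evaluations you propose reprove the finite summation formula of the \emph{preceding} theorem (the one whose right-hand side is $F\bigl(^{A:\,B,\,C}_{D:\,E,\,F_i-pI,\,F^{i}};x,y\bigr)$), not \eqref{s4}. The two identities are binomial inverses of one another, so your computation could in principle be completed by an explicit inversion step, but you neither state nor carry out such a step, and the direct rearrangement you assert does not hold. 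The same structural mismatch affects your argument for \eqref{s5}.

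The missing device is the insertion of $I=y^{I-F_i}\,y^{F_i-I}$ \emph{inside} the function before differentiating, together with the opposite grouping of Leibniz factors: take $f(y)=F\bigl(^{A:\,B,\,C}_{D:\,E,\,F};x,y\bigr)\,y^{F_i-I}$ and $g(y)=y^{I-F_i}$, so that $f(y)g(y)$ is the original function. Then $D_y^k\{f\}$ is exactly what produces the $F_i-kI$ functions appearing in the sum on the left of \eqref{s4}, $D_y^{r-k}\{g\}$ supplies the coefficients $(I-F_i)_k\,((2-r)I-F_i)_k^{-1}$ up to sign and a power of $y$, and the \emph{total} derivative $D_y^r\{fg\}=D_y^r\{F\}$ is evaluated in one stroke by \eqref{s1}, yielding the single fully shifted function on the right. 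For \eqref{s5} the same trick is applied to the pre-shifted function $F\bigl(^{A:\,B,\,C}_{D:\,E,\,F_i+rI,\,F^{i}};x,y\bigr)\,y^{I-F_i-rI}\times y^{F_i+rI-I}$, which is where the $F_i+2rI$ and the factor $(F_i+rI)_r^{-1}$ on the right-hand side come from. Your Pochhammer identities such as $(I-F_i)_k(F_i-kI)_k^{-1}=(-1)^kI$ are correct and the commutativity bookkeeping is as you say, but without the regrouping above the proof does not reach the stated formulas.
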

{\bf Proof:} We first prove identity (\ref{s4}). 
From the definition of  Kamp\'e de F\'eriet hypergeometric matrix function and  the generalized Leibnitz formula for differentiation of a product of two functions, we obtain the following result:
\begin{align*}
&D_{y}^{r}\{F^{m_1; n_1, n'_1}_{m_2; n_2, n'_2}\left(^{A: B, C}_{D: E, F}; x, y\right) y^{I-F_{i}}\times y^{F_{i}-I}\}\\
&=\sum_{k=0}^{r}{r\choose k}D_{y}^{k}\{\,F^{m_1; n_1, n'_1}_{m_2; n_2, n'_2}\left(^{A: B, C}_{D: E, F}; x, y\right)y^{F_{i}-I}\}D_{y}^{r-k}\{y^{I-F_{i}}\}\\
&=\sum_{k=0}^{r}(-1)^{r+k}{r\choose k}
 F^{m_1; n_1, n'_1}_{m_2; n_2, n'_2}\left(^{A: B, C}_{D: E, F_{i}-kI, F^{i}}; x, y\right)\\&\quad\times{(F_{i}-I)_{r}(I-F_{i})_{k}}{(2I-F_{i}-rI)^{-1}_{k}\,y^{-r}}.
\end{align*}
Now using the derivative operator on Kamp\'e de F\'eriet hypergeometric matrix function for $r$-times directly and equating with the above equality gives (\ref{s4}) after some simplification. Next, applying the operator $D_{y}^{r}$ on 
\begin{align*}
F^{m_1; n_1, n'_1}_{m_2; n_2, n'_2}\left(^{A: B, C}_{D: E, F_{i}+rI, F^{i}}; x, y\right)y^{I-F_{i}-rI}\times y^{F_{i}+Ir-I},
\end{align*}
and proceeding as in the proof of (\ref{s4}) gives the result (\ref{s5}).

\section{Infinite summation formulas for Kamp\'e de F\'eriet hypergeometric matrix function}
In this section, we will  establish the infinite summation formulas of Kamp\'e de F\'eriet hypergeometric matrix function.
\begin{theorem}\label{t1}The following infinite summation formulas of Kamp\'e de F\'eriet hypergeometric matrix function hold true:
\begin{align}
&\sum_{k=0}^{\infty}\frac{(A_{i})_{k}}{k!}\,t^{k}F^{m_1; n_1, n'_1}_{m_2; n_2, n'_2}\left(^{A_i+kI, \,A^{i}: B, \,C}_{\,\,\,\,\,\,D\,\,\,\,\,\,\,\,\,\,\,: \,E, \,\,\,\,\,F\,\,\,}; x, y\right)\notag\\
&=\, (1-t)^{-A_{i}}\,F^{m_1; n_1, n'_1}_{m_2; n_2, n'_2}\left(^{A: B, C}_{D: E, F}; \frac{x}{1-t}, \frac{y}{1-t}\right),\label{1s2}
\end{align}
where $i=1,\dots,m_1$;
\begin{align}
&\sum_{k=0}^{\infty}\frac{(B_{i})_{k}}{k!}\,t^{k}F^{m_1; n_1, n'_1}_{m_2; n_2, n'_2}\left(^{A : B_i+kI,\, B^{i},  C}_{ D : \,\,\,E,\,\, F}; x, y\right)\notag\\
&=\, (1-t)^{-B_{i}}\,F^{m_1; n_1, n'_1}_{m_2; n_2, n'_2}\left(^{A: B, C}_{D: E, F}; \frac{x}{1-t}, {y}\right),  A_i B_j=B_j A_i\label{t1s2}
\end{align}
where  $i=1,\dots,n_1$.
\end{theorem}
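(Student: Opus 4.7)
The plan is to prove both (\ref{1s2}) and (\ref{t1s2}) by expanding the inner Kamp\'e de F\'eriet matrix function via (\ref{1eq6}), applying a Pochhammer-type splitting, interchanging the order of summation, and evaluating the resulting $k$-series through the matrix binomial theorem
\[
(1-t)^{-M}=\sum_{k=0}^{\infty}\frac{(M)_k}{k!}\,t^k,\qquad |t|<1,
\]
which is the functional-calculus lift of the scalar $(1-t)^{-a}$ expansion.

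For (\ref{1s2}), I would substitute (\ref{1eq6}) into the left-hand side so that the only factor depending on $k$ becomes $(A_i+kI)_{m+n}$, and then use the matrix identity
\[
(A_i)_k\,(A_i+kI)_{m+n}=(A_i)_{k+m+n}=(A_i)_{m+n}\,\bigl(A_i+(m+n)I\bigr)_k,
\]
which is valid because all three factors are polynomials in the single matrix $A_i$ and hence mutually commute. Interchanging the $k$-sum with the $(m,n)$-sums (justified by absolute convergence on a suitable polydisc with $|t|<1$ and $x,y$ inside the convergence region of the rescaled Kamp\'e de F\'eriet series), the inner sum over $k$ evaluates to $(1-t)^{-A_i-(m+n)I}=(1-t)^{-A_i}(1-t)^{-(m+n)}$ by the matrix binomial theorem. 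Extracting $(1-t)^{-A_i}$ from the double sum (permissible because it commutes with everything by the standing commutativity hypotheses) and absorbing the scalar $(1-t)^{-m-n}$ into $x^m y^n$ rescales the variables to $x/(1-t)$ and $y/(1-t)$, producing exactly the right-hand side of (\ref{1s2}).

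The proof of (\ref{t1s2}) proceeds along the same template, with $A_i$ replaced by $B_i$. The only factor depending on $k$ in the series is now $(B_i+kI)_m$, so the Pochhammer splitting reads $(B_i)_{k+m}=(B_i)_m(B_i+mI)_k$ and the inner $k$-series evaluates to $(1-t)^{-B_i}(1-t)^{-m}$. Consequently only the $x$-variable inherits the rescaling $x\mapsto x/(1-t)$, while $y$ is untouched, as asserted.

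I expect the principal technical obstacle to be the careful bookkeeping of the commutativity relations: one must verify that $(A_i)_k$ (respectively $(B_i)_k$) commutes with every Pochhammer matrix appearing in the series, that the matrix exponent $-A_i-(m+n)I$ (respectively $-B_i-mI$) splits across $(1-t)^{-(\cdot)}$ as a product, and that $(1-t)^{-A_i}$ can be pulled outside the double sum without disturbing the order of the matrix products. All of these are covered by the commutativity hypotheses inherited from the preceding theorems; with these in place the interchange of the triple series and the final identification of the double sum as the Kamp\'e de F\'eriet matrix function at the rescaled arguments are routine.
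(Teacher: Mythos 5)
Your proposal is correct and follows essentially the same route as the paper: the same Pochhammer splitting $(A_i)_k(A_i+kI)_{m+n}=(A_i)_{m+n}(A_i+(m+n)I)_k$, the same interchange of summation, and the same evaluation of the inner $k$-series, which the paper writes as ${}_1F_0[A_i+(m+n)I;-;t]=(1-t)^{-A_i-(m+n)I}$ — precisely your matrix binomial theorem. Your added remarks on convergence and on tracking the commutativity hypotheses only make explicit what the paper leaves implicit.
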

{\bf Proof:} We give a proof of identity (\ref{1s2}). Applying the definition of  Kamp\'e de F\'eriet hypergeometric matrix function  and  transformation \begin{align*}(A_{i})_{k} \,(A_{i}+kI)_{m+n}= (A_{i})_{m+n}\, (A_{i}+(m+n)I)_{k},\end{align*} the left side of (\ref{1s2}) can be expressed as  
\begin{align*}
\sum_{m, n=0}^{\infty} \, _{1}F_{0}\left[^{A_{i}+(m+n)I}_{\,\,\,\,\quad-\,\,\,\,}; t\right]\prod_{i=1}^{m_1}(A_i)_{m+n} \prod_{i=1}^{n_1}(B_i)_{m} \prod_{i=1}^{n'_1}(C_i)_{n} \prod_{i=1}^{m_2}(D_i)^{-1}_{m+n} \prod_{i=1}^{n_2}(E_i)^{-1}_{m} \prod_{i=1}^{n'_2}(F_i)^{-1}_{n}\frac{x^{m} y^{n} }{m! n! }.
\end{align*}
Using the identity
\begin{align}
_{1}F_{0}\left[{A};-; t\right]= (1-t)^{-A}.\label{N}
\end{align}
After some simplification, we get the right side of (\ref{1s2}). This completes the proof of (\ref{1s2}). Identitiy (\ref{t1s2})  are proved in a similar manner.

\begin{theorem}
Let  $A_iB_j=B_jA_i$;  $D_i E_j= E_j D_i$; $D_iF_j= F_j D_i$; $E_i F_j=F_j E_i$. Then  infinite summation formulas of Kamp\'e de F\'eriet hypergeometric matrix function hold true:
\begin{align}
F^{m_1; n_1, n'_1}_{m_2; n_2, n'_2}\left(^{A: B, C}_{ D : E, F}; x+t, y\right)=&\sum_{k=0}^{\infty}{[A]_{k}[B]_{k}}\,\frac{t^{k}}{k!}\notag\\
\qquad\qquad&\times F^{m_1; n_1, n'_1}_{m_2; n_2, n'_2}\left(^{A+kI: B+kI, C}_{ D+kI : E+kI, F}; x, y\right)[D]^{-1}_{k}[E]^{-1}_{k}.\label{s3}
\end{align}

\end{theorem}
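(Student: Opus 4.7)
The natural strategy is to expand $(x+t)^m$ by the binomial theorem, reorganize the resulting triple series by the substitution $m\mapsto m'+k$, and split off the $k$-dependent Pochhammer symbols so that the inner double sum is recognizable as another Kamp\'e de F\'eriet matrix function.

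Starting from the definition (\ref{1eq6}) applied at the point $(x+t, y)$, I write
\begin{align*}
&F^{m_1; n_1, n'_1}_{m_2; n_2, n'_2}\!\left(^{A: B, C}_{D: E, F}; x+t, y\right)\\
&= \sum_{m,n\ge 0}[A]_{m+n}[B]_m[C]_n[D]^{-1}_{m+n}[E]^{-1}_m[F]^{-1}_n\frac{(x+t)^m y^n}{m!\,n!}.
\end{align*}
Since $x,t$ are scalars, $(x+t)^m = \sum_{k=0}^{m}\binom{m}{k}x^{m-k}t^k$, so after interchanging the order of summation and setting $m'=m-k$ the coefficient becomes $\frac{1}{k!\,m'!\,n!}$, and the matrix block reads
\begin{align*}
[A]_{m'+k+n}[B]_{m'+k}[C]_n[D]^{-1}_{m'+k+n}[E]^{-1}_{m'+k}[F]^{-1}_n\, x^{m'}t^k y^n.
\end{align*}

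Next I apply the Pochhammer splitting identity $(M)_{p+q}=(M)_p(M+pI)_q$, componentwise, to each of $[A]_{m'+k+n}$, $[B]_{m'+k}$, $[D]^{-1}_{m'+k+n}$, $[E]^{-1}_{m'+k}$. This yields
\begin{align*}
&[A]_k[A+kI]_{m'+n}\,[B]_k[B+kI]_{m'}\,[C]_n\\
&\times[D]^{-1}_k[D+kI]^{-1}_{m'+n}\,[E]^{-1}_k[E+kI]^{-1}_{m'}\,[F]^{-1}_n.
\end{align*}
Now I invoke the commutation hypotheses: $A_iB_j=B_jA_i$ (hence $[A+kI]_{m'+n}$ commutes with $[B]_k$), while $D_iE_j=E_jD_i$, $D_iF_j=F_jD_i$, $E_iF_j=F_jE_i$ (so $[D]^{-1}_k$ can be pushed past the $E,F$ blocks, and $[E]^{-1}_k$ past the $F$ block). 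These swaps leave the prefactor $[A]_k[B]_k$ on the far left and the postfactor $[D]^{-1}_k[E]^{-1}_k$ on the far right, while the middle retains the order required for a Kamp\'e de F\'eriet expression with shifted arguments.

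Pulling out $[A]_k[B]_k\,t^k/k!$ and summing over $m',n\ge 0$ gives precisely
\[
F^{m_1; n_1, n'_1}_{m_2; n_2, n'_2}\!\left(^{A+kI: B+kI, C}_{D+kI: E+kI, F}; x, y\right),
\]
so the triple sum collapses to the right-hand side of (\ref{s3}). The main obstacle is entirely bookkeeping: the matrices do not commute in general, and one must verify that the listed commutation relations are exactly what is needed to migrate the $k$-dependent Pochhammer blocks to the outside without disturbing the order of the surviving factors. No convergence issues arise that aren't already implicit in the hypotheses on $F^{m_1;n_1,n_1'}_{m_2;n_2,n_2'}$.
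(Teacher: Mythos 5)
Your proof is correct and is essentially the paper's own argument run in reverse: the paper starts from the right-hand side, merges the Pochhammer blocks via $(A)_k(A+kI)_m=(A)_{k+m}$, substitutes $l=m+k$, and collapses the inner sum with the binomial theorem, whereas you start from the left-hand side, expand $(x+t)^m$ binomially, and split the Pochhammer blocks with the same identity. The key identities and the commutation bookkeeping are identical in both directions, so there is no substantive difference.
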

{\bf Proof:} From the definition Kamp\'e de F\'eriet hypergeometric matrix function and the transformation $(A)_{k}\,(A+kI)_{m}= (A)_{k+m}$, the right side of (\ref{s3}) can be written as 
\begin{align*}
\sum_{k, m, n=0}^{\infty}\prod_{i=1}^{m_1}(A_i)_{m+n+k} \prod_{i=1}^{n_1}(B_i)_{m+k} \prod_{i=1}^{n'_1}(C_i)_{n} \prod_{i=1}^{m_2}(D_i)^{-1}_{m+n+k} \prod_{i=1}^{n_2}(E_i)^{-1}_{m+k} \prod_{i=1}^{n'_2}(F_i)^{-1}_{n}\frac{x^{m} y^{n} }{m! n! k!}\, t^{k}.
\end{align*}
Replacing $m+k\rightarrow l$ in the above result and after some simplification, we get
\begin{align*}
&\sum_{l, n=0}^{\infty}\prod_{i=1}^{m_1}(A_i)_{l+n} \prod_{i=1}^{n_1}(B_i)_{l} \prod_{i=1}^{n'_1}(C_i)_{n} \prod_{i=1}^{m_2}(D_i)^{-1}_{l+n} \prod_{i=1}^{n_2}(E_i)^{-1}_{l} \prod_{i=1}^{n'_2}(F_i)^{-1}_{n}\frac{ y^{n} }{l! n! }\, \sum_{k=0}^{l}{l\choose k}x^{l-k} t^{k}\notag\\
&=\sum_{l, n=0}^{\infty}\prod_{i=1}^{m_1}(A_i)_{l+n} \prod_{i=1}^{n_1}(B_i)_{l} \prod_{i=1}^{n'_1}(C_i)_{n} \prod_{i=1}^{m_2}(D_i)^{-1}_{l+n} \prod_{i=1}^{n_2}(E_i)^{-1}_{l} \prod_{i=1}^{n'_2}(F_i)^{-1}_{n}\frac{(x+t)^{l} y^{n} }{l! n! }.
\end{align*}
Using relation
\begin{align}
\sum_{k=0}^{l}{l\choose k} x^{k} y^{l-k}= (x+y)^{l}
\end{align}
in the inner summation. This completes the proof of (\ref{s3}). 

\begin{theorem}\label{t9}
The following infinite summation formulas of Kamp\'e de F\'eriet hypergeometric matrix function hold true:
\begin{align}
&\sum_{k=0}^{\infty}\frac{(B_{i})_{k}}{k!}\,(-t)^{k}F^{m_1; n_1, n'_1}_{m_2; n_2, n'_2}\left(^{A: -kI, B^{i}, C}_{ D : E, F}; \frac{1+t}{t}x, y\right)\notag\\
&=\, (1+t)^{-B_{i}}\,F^{m_1; n_1, n'_1}_{m_2; n_2, n'_2}\left(^{A: B, C}_{ D : E, F}; x, y\right),\label{10s1}
\end{align}
where  $A_j B_i = B_i A_j$,  $i=1,\dots,n_1$.
\end{theorem}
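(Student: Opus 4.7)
The approach mirrors the method used for Theorem \ref{t1}: expand the left side as an explicit multiple series, reorganize the $k$-sum into a ${}_1F_0$ that evaluates to a binomial matrix function, and recognize the resulting series as the right side.

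First I would substitute the series definition (\ref{1eq6}) of the Kampé de Fériet matrix function into the left hand side of (\ref{10s1}), noting that replacing $B_i$ by $-kI$ introduces the Pochhammer factor $(-kI)_m = (-k)_m\,I$, which vanishes for $m>k$. Thus the double series on $m,n$ can be combined with the outer sum on $k$ to give
\begin{align*}
\text{LHS}=\sum_{m,n\geq 0}\Bigl[\sum_{k\geq m}\tfrac{(B_i)_k(-k)_m}{k!}(-t)^k\Bigr]\,[A]_{m+n}[B^i]_m[C]_n[D]^{-1}_{m+n}[E]^{-1}_m[F]^{-1}_n\,\tfrac{((1+t)/t)^m x^m y^n}{m!\,n!}
\end{align*}
(the interchange of summation being justified by absolute convergence on a suitable domain).

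Next I would evaluate the bracketed inner sum. Setting $k=m+j$ and using $(-k)_m=(-1)^m k!/j!$ together with the identity $(B_i)_{m+j}=(B_i)_m(B_i+mI)_j$, the inner sum collapses to
\begin{align*}
t^m (B_i)_m\,{}_1F_0\!\bigl[B_i+mI;-;-t\bigr]=t^m(B_i)_m(1+t)^{-(B_i+mI)}=(B_i)_m(1+t)^{-B_i}\Bigl(\tfrac{t}{1+t}\Bigr)^m,
\end{align*}
by the matrix binomial formula (\ref{N}); here the fact that $B_i$ commutes with itself permits the factorization $(1+t)^{-(B_i+mI)}=(1+t)^{-B_i}(1+t)^{-m}$ and the pull-out of $(B_i)_m$.

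Substituting this back, the $\bigl(t/(1+t)\bigr)^m$ cancels the $\bigl((1+t)/t\bigr)^m$ in the series, and using $(B_i)_m[B^i]_m=[B]_m$ (valid by the commutativity assumption $A_jB_i=B_iA_j$, which allows $(1+t)^{-B_i}$ to be pulled to the left past the $[A]_{m+n}$ factor), I obtain exactly $(1+t)^{-B_i}$ times the series for $F^{m_1;n_1,n'_1}_{m_2;n_2,n'_2}\!\bigl(^{A:B,C}_{D:E,F};x,y\bigr)$, which is the right hand side of (\ref{10s1}). The main delicacy is the bookkeeping around commutativity when extracting the matrix factor $(1+t)^{-B_i}$ past the other Pochhammer blocks; everything else is a routine reindexing and application of the binomial identity already used in Theorem \ref{t1}.
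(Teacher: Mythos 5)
Your proposal is correct and follows essentially the same route as the paper's proof: expand the left side by the series definition, exploit the vanishing of $(-kI)_m$ for $m>k$, reindex $k=m+j$ to produce an inner ${}_1F_0[B_i+mI;-;-t]$, and evaluate it by the binomial identity (\ref{N}) so that the powers of $t$ and $1+t$ cancel to leave $(1+t)^{-B_i}$ times the original series. Your write-up is somewhat more explicit than the paper's about the inner-sum evaluation and the commutativity bookkeeping, but the argument is the same.
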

{\bf Proof:} From the definition of Kamp\'e de F\'eriet hypergeometric matrix function, the left side of (\ref{10s1}) can be expressed as
\begin{align*}
&\sum_{k,n =0}^{\infty}\sum_{m=0}^{k}\frac{(B_{i})_{k}(-kI)_{m}}{m! n!  k!}\prod_{i=1}^{m_1}(A_i)_{m+n} \prod_{j=1,\neq i}^{n_1}(B_i)_{m} \prod_{i=1}^{n'_1}(C_i)_{n} \prod_{i=1}^{m_2}(D_i)^{-1}_{m+n} \prod_{i=1}^{n_2}(E_i)^{-1}_{m} \prod_{i=1}^{n'_2}(F_i)^{-1}_{n}\notag\\&(\frac{1+t}{t} x)^{m} y^{n} (-t)^{k},
\end{align*}
Replacing $k=m+l$, changing the summation order and simplifying, we get
\begin{align*}
&\sum_{m, n=0}^{\infty}\, _{1}F_{0}\left[^{B_{i}+mI}_{\,-\,}; -t\right]\prod_{i=1}^{m_1}(A_i)_{m+n} \prod_{i=1}^{n_1}(B_i)_{m} \prod_{i=1}^{n'_1}(C_i)_{n} \prod_{i=1}^{m_2}(D_i)^{-1}_{m+n} \prod_{i=1}^{n_2}(E_i)^{-1}_{m} \prod_{i=1}^{n'_2}(F_i)^{-1}_{n}\frac{x^{m}y^{n}}{m! n! } \notag\\& (1+t)^{m} .
\end{align*}
Evaluating the inner $ _{1}F_{0}$-series in the above equation by (\ref{N}) 
\begin{align*}
_{1}F_{0}\left[{B_{i}+mI}; - ; -t\right]= (1+t)^{-B_{i}-mI}.
\end{align*}
and simplifying, we get the right side of this theorem. This completes the proof of this theorem.


\begin{theorem}Let $A_i$, $B_i$, $C_i$, $D_i$, $E_i$, $F_i$ are positive stable matrices in $\mathbb{C}^{r\times r}$. Then 
 infinite summation formulas of Kamp\'e de F\'eriet hypergeometric matrix function hold true:
\begin{align}
&\sum_{k=0}^{\infty}\frac{(B_{i})_{k}}{k!}\,\Big(\frac{t+x}{x-1}\Big)^{k}\,F^{m_1; n_1, n'_1}_{m_2; n_2, n'_2}\left(^{A: -kI, B^{i}, C}_{ D : E, F}; \frac{1+t}{t+x}x, y\right)\notag\\
&=\,\Big (\frac{1-x}{t+1}\Big)^{B_{i}}\,F^{m_1; n_1, n'_1}_{m_2; n_2, n'_2}\left(^{A: B, C}_{ D : E, F}; x, y\right),\label{10s2}
\end{align}
where $A_j B_i= B_i A_j$, $i=1,\dots, n_1$.
\end{theorem}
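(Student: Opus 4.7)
The plan is to follow the template established in the proof of Theorem~\ref{t9}: expand the inner Kamp\'e de F\'eriet series on the left-hand side of (\ref{10s2}) through definition (\ref{1eq6}), reindex the resulting triple sum, and collapse the inner summation to a ${}_{1}F_{0}$ that is evaluated by means of (\ref{N}).

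First, I would substitute (\ref{1eq6}) into the inner $F$ and obtain a triple sum in $k,m,n$. Because $(-kI)_{m}$ vanishes for $m>k$ and equals $(-1)^{m}k!/(k-m)!\,I$ otherwise, the natural move is to set $l=k-m$ and split $(B_{i})_{k}=(B_{i})_{m}(B_{i}+mI)_{l}$; this is legitimate under the hypothesis $A_{j}B_{i}=B_{i}A_{j}$ together with the implicit commutativity of $B_{i}$ with the other matrix families. At the same time the two powers of $t+x$ (one from $\bigl((1+t)x/(t+x)\bigr)^{m}$, one from $\bigl((t+x)/(x-1)\bigr)^{k}$ with $k=m+l$) partially cancel, which is what makes the maneuver work.

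Second, after the reindexing the $l$-sum becomes
\begin{align*}
\sum_{l\geq 0}\frac{(B_{i}+mI)_{l}}{l!}\left(\frac{t+x}{x-1}\right)^{l}={}_{1}F_{0}\!\left[B_{i}+mI;\,-;\,\frac{t+x}{x-1}\right]=\left(\frac{1-x}{1+t}\right)^{B_{i}+mI},
\end{align*}
invoking (\ref{N}) together with the algebraic simplification $1-(t+x)/(x-1)=(1+t)/(1-x)$. Pulling $\bigl((1-x)/(1+t)\bigr)^{B_{i}}$ outside as a matrix prefactor and collecting the $m$-th scalar powers, the residual weight simplifies to $x^{m}$; the $(B_{i})_{m}$ produced in the splitting recombines with $\prod_{j\neq i}(B_{j})_{m}$ to restore the full $B$-product, reproducing $F^{m_{1};n_{1},n'_{1}}_{m_{2};n_{2},n'_{2}}\bigl(^{A:B,C}_{D:E,F};x,y\bigr)$ on the right.

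The main obstacle is not conceptual but careful bookkeeping: (i) verifying that the scalar prefactors $(-1)^{m}\bigl((1+t)x/(t+x)\bigr)^{m}\bigl((t+x)/(x-1)\bigr)^{m}\bigl((1-x)/(1+t)\bigr)^{m}$ collapse exactly to $x^{m}$, and (ii) checking that the stated commutativity hypothesis $A_{j}B_{i}=B_{i}A_{j}$ (supplemented by the customary commutativity of $B_{i}$ with the other parameter matrices) really suffices to extract $\bigl((1-x)/(1+t)\bigr)^{B_{i}}$ past every matrix factor of the Kamp\'e de F\'eriet series. Convergence of the triple sum, which requires $|(t+x)/(x-1)|<1$ in addition to the natural convergence domain of the outer series at $(x,y)$, justifies the interchange of the $k$- and $l$-summations used throughout.
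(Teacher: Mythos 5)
Your proposal is correct and follows exactly the route the paper intends: the paper omits the details, stating only that the proof is similar to that of Theorem~\ref{t9}, and your expansion--reindexing ($k=m+l$, using the truncation from $(-kI)_m$ and the splitting $(B_i)_{m+l}=(B_i)_m(B_i+mI)_l$) followed by evaluation of the inner ${}_1F_0$ via (\ref{N}) is precisely that argument. The key simplifications you flag do check out: $1-\tfrac{t+x}{x-1}=\tfrac{1+t}{1-x}$ and the $m$-th power prefactors collapse to $x^m$, so the argument closes as claimed.
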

{\bf Proof:} The proof of this theorem is similar to Theorem \ref{t9}. 
We omit the details.

\end{document}